\def\RSthmtxt{theorem~}\newref{thm}{name = \RSthmtxt}}
\def\RSlemtxt{lemma~}\newref{lem}{name = \RSlemtxt}}
\numberwithin{equation}{section}
\numberwithin{figure}{section}
\theoremstyle{plain}
\newtheorem{thm}{\protect\theoremname}
  \theoremstyle{remark}
  \newtheorem{rem}{\protect\remarkname}
  \theoremstyle{plain}
  \newtheorem{prop}{\protect\propositionname}
  \theoremstyle{plain}
  \newtheorem*{thm*}{\protect\theoremname}
  \theoremstyle{plain}
  \newtheorem{lem}{\protect\lemmaname}
  \providecommand{\lemmaname}{Lemma}
  \providecommand{\propositionname}{Proposition}
  \providecommand{\remarkname}{Remark}
  \providecommand{\theoremname}{Theorem}
\providecommand{\theoremname}{Theorem}
\begin{document}

\title{Scattering for nls with a potential on the line}

\author{David Lafontaine $^{*}$}

\thanks{$^{*}$ david.lafontaine@unice.fr, Laboratoire de Mathématiques J.A.
Dieudonné, UMR CNRS 7351, Université de Nice - Sophia Antipolis, 06108
Nice Cedex 02 France}

\begin{abstract}
We show the $H^{1}$ scattering for a one dimensional nonlinear Schrödinger
equation with a non-negative, repulsive potential $V$ such that $V,xV\in W^{1,1}$,
and a mass-supercritical non-linearity. We follow the approach of
concentration-compacity/rigidity first introduced by Kenig and Merle.
\end{abstract}

\maketitle
\tableofcontents{}

\section{Introduction}

We consider the following one dimensional defocusing, non linear Schrödinger
equation with a potential

\begin{equation}
i\partial_{t}u+\Delta u-Vu=u|u|^{\alpha},\;u(0)=\varphi\in H^{1}(\mathbb{R}).\label{eq:nlsv}
\end{equation}
If $V\in L^{1}$, $-\Delta+V$ is essentially self-adjoint, so by
Stones theorem the equation is globally well posed in $L^{2}(\mathbb{R})$
and $e^{it(-\Delta+V)}$ is an $L^{2}$-isometry. Goldberg and Schlag
obtained in \cite{MR2096737} the dispersive estimate 
\[
\Vert e^{-it(-\Delta+V)}\psi\Vert_{L^{\infty}}\lesssim\frac{1}{|t|^{\frac{1}{2}}}\Vert\psi\Vert_{L^{1}}
\]
under the assumption that $V$ belongs to $L_{1}^{1}(\mathbb{R})$,
ie $\int_{-\infty}^{\infty}|V(x)|(1+|x|)dx<\infty$, and that $-\Delta+V$
has no resonance at zero energy. In particular, we will consider a
non-negative potential, which always verifies this no-resonance hypothesis
as we will see in Section 2. This estimate gives us usual Strichartz
estimates described below in the paper. Because of the energy conservation
law
\[
E(u(t)):=\frac{1}{2}\int|\nabla u(t)|^{2}+\int V|u(t)|^{2}+\frac{1}{\alpha+2}\int|u(t)|^{\alpha+2}=E(u(0))
\]
the $L^{2}$-well-posedness result extends to the global well-posedness
of the problem (\ref{eq:nlsv}) in $H^{1}(\mathbb{R})$: for every
$\varphi\in H^{1}(\mathbb{R})$, there exists a unique, global solution
$u\in C(\mathbb{R},H^{1}(\mathbb{R}))$ of (\ref{eq:nlsv}). Finally,
let us recall that the mass $M(u(t)):=\int|u(t)|^{2}$ is conserved
too. 

For the mass-supercritical ($\alpha>4$) homogeneous equation
\begin{equation}
i\partial_{t}u+\Delta u=u|u|^{\alpha},\;u(0)=\varphi\in H^{1}(\mathbb{R})\label{eq:nls0}
\end{equation}
it is well known since Nakanishi's paper \cite{MR1726753} that the
solutions \textit{scatter} in $H^{1}(\mathbb{R})$, that is, for every
solution $u\in C(\mbox{\ensuremath{\mathbb{R}},}H^{1}(\mathbb{R}))$
of (\ref{eq:nls0}), there exists a unique couple of data $\psi_{\pm}\in H^{1}(\mathbb{R})$
such that
\[
\Vert u(t)-e^{-it\Delta}\psi_{\pm}\Vert_{H^{1}(\mathbb{R})}\underset{t\rightarrow\pm\infty}{\longrightarrow0}.
\]
Alternative proofs of this result can be found in \cite{MR2518079},
\cite{MR2527809}, \cite{MR2838120} and \cite{MR2576708}.

We prove the scattering of solutions of (\ref{eq:nlsv}) in dimension
one for sufficently regular, non-negative and repulsive potential
$V$.
\begin{thm}
\label{thm:main}Let $\alpha>4$ and $V\in L_{1}^{1}(\mathbb{R})$
be such that $V'\in L_{1}^{1}(\mathbb{R})$. We suppose moreover that
$V$ is non-negative and repulsive: $V\geq0$ and $xV'\leq0$. Then,
every solution $u\in C(\mathbb{R},H^{1}(\mathbb{R}))$ of (\ref{eq:nlsv})
with potential $V$ scatters in $H^{1}(\mathbb{R})$.
\end{thm}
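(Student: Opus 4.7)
The strategy is to implement the Kenig--Merle concentration-compactness/rigidity roadmap for the perturbed equation \eqref{eq:nlsv}, relying on Nakanishi's scattering result \cite{MR1726753} for the flat equation \eqref{eq:nls0} as an input rather than redoing it. The first step is linear theory: I would deduce from the Goldberg--Schlag dispersive estimate the full family of Strichartz estimates for $e^{-it(-\Delta+V)}$ (the non-negativity of $V$ eliminates the threshold resonance, as the author notes will be checked in Section~2), and then set up the usual small-data scattering theory and a stability lemma for \eqref{eq:nlsv} in a mass-supercritical Strichartz space such as $L^{q}_{t}L^{r}_{x}$ with $(q,r)$ chosen compatibly with $\alpha$. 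At this point one has the standard criterion: any $H^{1}$ solution with a finite Strichartz norm scatters, and any non-scattering solution must blow up this norm.

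Next comes the concentration-compactness step. Assuming toward contradiction that scattering fails for some datum, I would define the critical energy $E_{c}=\inf\{E(\varphi):u_{\varphi}\text{ does not scatter}\}$ and extract a minimising sequence $\varphi_{n}$. A linear profile decomposition tailored to $-\Delta+V$ is needed; because $V\in L^{1}_{1}$, profiles whose spatial centers $x_{n}$ satisfy $|x_{n}|\to\infty$ should be analyzed with the free propagator $e^{-it\Delta}$ (the potential becomes asymptotically invisible for such profiles, a fact one proves using the $L^{1}$ decay of $V$ and local smoothing), whereas profiles with bounded centers are kept attached to $e^{-it(-\Delta+V)}$. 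The escaping profiles then scatter by Nakanishi, so after the standard orthogonality/Pythagorean decomposition of energy only a single bounded-center profile can survive at the critical level, producing a critical element $u_{c}$ with $E(u_{c})=E_{c}$, not scattering, and whose trajectory $\{u_{c}(t)\}_{t\in\mathbb{R}}$ is precompact in $H^{1}(\mathbb{R})$ (no translation parameter is needed since the potential breaks translation invariance and attached profiles cannot drift to infinity without escaping).

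The rigidity step should exploit the repulsivity hypothesis $xV'\leq 0$ via a virial/Morawetz identity with weight $a(x)=x$ (or a smooth truncation of it). Differentiating $\int a'|u|^{2}$ twice in time produces, after integration by parts, a positive nonlinear term proportional to $\frac{\alpha}{\alpha+2}\int|u|^{\alpha+2}$ (good sign since the equation is defocusing and $\alpha>4$), a positive potential term $-\int xV'|u|^{2}$ (good sign by the repulsive condition), and a kinetic term controlled by $\int|\partial_{x}u|^{2}$. The compactness of the trajectory $\{u_{c}(t)\}$ together with conservation of mass and energy then gives a uniform lower bound on the right-hand side, whereas the boundary term $\int a'|u_{c}|^{2}$ remains uniformly bounded in time by the $H^{1}$ bound on $u_{c}$; integrating in time and sending $T\to\infty$ forces $u_{c}\equiv 0$, contradicting $E(u_{c})=E_{c}>0$.

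The main obstacle I expect is the treatment of the escaping profiles in the concentration-compactness step. In one dimension the potential does not have a scale-invariant decay rate, and one must carefully justify that if $\psi_{n}=\psi(\cdot-x_{n})$ with $|x_{n}|\to\infty$, then $e^{-it(-\Delta+V)}\psi_{n}-e^{-it\Delta}\psi_{n}\to 0$ in a Strichartz norm strong enough to transfer scattering from the free nonlinear profile to the perturbed one via the stability lemma. This likely requires combining the integrability $V,xV\in L^{1}$ with local smoothing estimates for $e^{-it(-\Delta+V)}$, and is the step where the weighted integrability hypotheses in the statement are genuinely used. The Morawetz/rigidity estimate itself, once the trajectory is known to be precompact, should be comparatively short because $d=1$ and $a'$, $a''$, $a'''$ are all bounded for the truncated weight.
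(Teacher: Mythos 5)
Your proposal follows essentially the same route as the paper: Strichartz and perturbation theory built on the Goldberg--Schlag dispersive estimate (with non-negativity of $V$ ruling out a zero-energy resonance), a profile decomposition adapted to $-\Delta+V$ in which spatially escaping profiles are compared with free nonlinear profiles that scatter by Nakanishi, extraction of a critical element with precompact $H^{1}$ trajectory, and a truncated virial rigidity argument exploiting $xV'\leq 0$ and $xV'\in L^{1}$. The only real difference is cosmetic: where you anticipate needing local smoothing for the escaping-profile comparison, the paper gets by with non-admissible inhomogeneous Strichartz estimates and the decay of the translated potential against localized $H^{1}$ functions.
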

We use the strategy of concentration-compacity/rigidity first introduced
by Kenig and Merle in \cite{MR2257393}, and extented to the intercritical
case by Holmer and Roudenko in \cite{MR2421484}, Duyckaerts, Holmer
and Roudenko in \cite{MR2470397}. In the case of a potential, the
main difficulty is the lack of translation invariance of the equation.
Notice that Hong obtained in \cite{MR2134950} the same result in
the three dimensional case for the focusing equation. However, his
approach cannot be extended to lower dimensions, as it requires endpoint
Strichartz estimates which are not available. Banica and Visciglia
treated in \cite{MR213495011} the case of the non linear Schrödinger
equation with a Dirac potential on the line, and we follow their approach.
The Dirac potential is more singular, but it allows the use of explicit
formulas that are not available in the present more general framework.
\begin{rem}
In dimension one or two, assume that $V$ is smooth and compactly
supported, and such that $\int V<0$. Then the operator $-\Delta+V$
has a negative eigenvalue: as a consequence, the hypothesis of positivity
of $V$ cannot be relaxed as in dimension three, where \cite{MR2134950}
only supposes that the potential has a small negative part, and, in
the same way, the hypothesis of repulsivity, which is needed for the
rigidity, cannot be relaxed to $xV'$ having a small positive part. 
\end{rem}

\begin{rem}
The hypothesis $V,V'\in L^{1}$ are needed to show that the operator
$A=-\Delta+V$ verifies the hypothesis of the abstract profile decomposition
of \cite{MR213495011}, whereas the hypothesis $xV'\in L^{1}$ and
$xV'\leq0$ are needed in the rigidity part.
\end{rem}

\begin{rem}
The same proof holds in dimension two up to the numerology and some
changes in the Hölder inequalities used in Propositions \propref{prof_assumpt},
\propref{decay_lin}, and \propref{decay_duhamel} to deal with the
fact that $H^{1}(\mathbb{R}^{2})$ is not embedded in $L^{\infty}(\mathbb{R}^{2})$.
\end{rem}
~

\begin{rem}
In the focusing, mass-supercritical case
\[
i\partial_{t}u+\Delta u-Vu+u|u|^{\alpha}=0
\]
the same arguments could be used to prove the scattering up to the
natural threshold given by the ground state associated to the equation,
in the spirit of \cite{MR2838120}.
\end{rem}

\subsection*{Notations. }

We will denote by $V$ a potential on the line satisfaying the hypothesis
of \thmref{main}, $\alpha$ will be a real number such that $\alpha>4$.
We set
\[
H^{1}=H^{1}(\mathbb{R}),\ C(H^{1})=C(\mathbb{R},H^{1}(\mathbb{R})),\;L^{p}L^{r}=L^{p}(\mathbb{R},L^{r}(\mathbb{R})),\;L^{p}(I)L^{r}=L^{p}(I,L^{r}(\mathbb{R}))
\]
for any interval $I$ of $\mathbb{R}$. We will denote by $\tau_{y}$
the translation operator defined by $\tau_{y}u=u(\cdot-y)$. Finaly,
we will use $A\lesssim B$ for inequalities of the type $A\leq CB$
where $C$ is a universal constant.

\section{Preliminaries}

From now on, we will fix the four following Strichartz exponents 
\[
r=\alpha+2,\;q=\frac{2\alpha(\alpha+2)}{\alpha^{2}-\alpha-4},\;p=\frac{2\alpha(\alpha+2)}{\alpha+4},\ \gamma=\frac{2\alpha}{\alpha-2}
\]

\subsection{Strichartz estimates}

Recall that we assume all along the paper that $V$ is in $L_{1}^{1}(\mathbb{R})$
and non negative. Goldberg and Schlag obtained in particular in \cite{MR2096737}
the dispersive estimate for the Schrödinger operator $-\Delta+V$
under these assumptions. 

Indeed, they require the hypothesis of absence of resonances at zero
energy. We claim that for $V\geq0$ this hypothesis is satisfied:
by the definition of \cite{MR2096737}, if there is a resonance at
zero, the solutions $u_{\pm}$ of
\begin{equation}
u''=Vu\label{eq:zerener}
\end{equation}
such that $u_{\pm}(x)\rightarrow1$ as $x\rightarrow\pm\infty$ have
a null Wronskian. Therefore $u_{\pm}$ are proportional, so they are
both non trivial bounded solutions of (\ref{eq:zerener}). But such
solutions cannot exist: indeed, if $u$ is such a solution, integrating
(\ref{eq:zerener}) one deduces that $u'$ has limits at $\pm\infty$.
These limits are both zero otherwise $u$ is not bounded. Now, multiplying
(\ref{eq:zerener}) by $u$, integrating it on $[-R,R]$, and letting
$R$ going to infinity, we obtain $\int_{\mathbb{R}}|u'|^{2}+V|u|^{2}=0$.
Therefore $u=0$, a contradiction.
\begin{prop}[Dispersive estimate, \cite{MR2096737}]
Let $V\in L_{1}^{1}(\mathbb{R})$ be such that $V\geq0$. Then, for
all $\psi\in L^{1}(\mathbb{R})$, we have 
\begin{equation}
\Vert e^{-it(-\Delta+V)}\psi\Vert_{L^{\infty}}\lesssim\frac{1}{|t|^{\frac{1}{2}}}\Vert\psi\Vert_{L^{1}}.\label{eq:linflun}
\end{equation}
\end{prop}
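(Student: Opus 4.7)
The plan is to verify the hypotheses of the Goldberg--Schlag dispersive estimate from \cite{MR2096737} and then apply it directly. Their one-dimensional theorem requires the weighted integrability $V \in L_1^1(\mathbb{R})$ together with the spectral condition that zero is neither an eigenvalue nor a resonance of the operator $-\Delta+V$. All of these follow from our assumptions.

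The weighted integrability is immediate. The absence of a negative or zero eigenvalue follows from the sign condition $V \geq 0$: the quadratic form $\langle (-\Delta+V)\psi, \psi \rangle = \int |\psi'|^{2} + \int V|\psi|^{2}$ is non-negative, so $-\Delta + V \geq 0$, and an eigenfunction at the bottom of the spectrum would force $\psi' \equiv 0$, hence $\psi \equiv 0$ as an element of $L^{2}$.

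The absence of a zero-energy resonance is exactly what the paragraph preceding the statement establishes: a resonance would give a pair of proportional bounded Jost solutions of $u'' = Vu$, hence a non-trivial bounded $u$; integrating $u'' = Vu$ shows that $u'$ has limits at $\pm \infty$, which must vanish since $u$ is bounded; then multiplying by $u$, integrating by parts on $[-R,R]$ and letting $R \to \infty$ yields the identity $\int |u'|^{2} + V|u|^{2} = 0$, forcing $u \equiv 0$, a contradiction. With every hypothesis of \cite{MR2096737} verified, the estimate follows.

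The proof is thus essentially a matter of checking the hypotheses rather than a new argument, and the only substantive step is the resonance argument. This is also the one whose assumption is most delicate: one sees from the computation above that the sign of $V$ plays a truly essential role in driving $u$ to zero, so that this hypothesis cannot be weakened to allow small negative parts in the way the higher-dimensional analogue permits.
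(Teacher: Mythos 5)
Your proposal is correct and follows essentially the same route as the paper: the estimate is taken from Goldberg--Schlag, and the only point to verify is the absence of a zero-energy resonance, which you establish by exactly the paper's argument (null Wronskian of the Jost solutions, hence a non-trivial bounded solution of $u''=Vu$, whose derivative tends to zero at infinity, and the integration-by-parts identity $\int|u'|^{2}+V|u|^{2}=0$ then forces $u\equiv0$). Your additional remark that $V\geq0$ also rules out negative and zero eigenvalues is harmless and consistent with the paper's framework.
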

Note that, interpolating the previous dispersive estimate (\ref{eq:linflun})
with the mass conservation law, we obtain immediatly for all $a\in[2,\infty]$
\begin{equation}
\Vert e^{it(-\Delta+V)}\psi\Vert_{L^{a}}\lesssim\frac{1}{|t|^{\frac{1}{2}(\frac{1}{a'}-\frac{1}{a})}}\Vert\psi\Vert_{L^{a'}}.\label{eq:dispest}
\end{equation}

Because of (\ref{eq:linflun}), we obtain by the classical $TT^{\star}$
method (see for example \cite{MR1646048}) the Strichartz estimates
\begin{equation}
\Vert e^{-it(-\Delta+V)}\varphi\Vert_{L^{q_{1}}L^{r_{1}}}+\Vert\int_{0}^{t}e^{-i(t-s)(-\Delta+V)}F(s)ds\Vert_{L^{q_{2}}L^{r_{2}}}\lesssim\Vert\varphi\Vert_{L^{2}}+\Vert F\Vert_{L^{q_{3}'}L^{r_{3}'}}\label{eq:adst}
\end{equation}
for all pairs $(q_{i},r_{i})$ satisfying the admissibility condition
in dimension one, that is
\[
\frac{2}{q_{i}}+\frac{1}{r_{i}}=\frac{1}{2}.
\]
We will need moreover the following Strichartz estimates associated
to non admissible pairs:
\begin{prop}[Strichartz estimates]
\label{strichartz}For all $\varphi\in H^{1}$, all $F\in L^{q'}L^{r'}$,
all $G\in L^{q'}L^{r'}$ and all $H\in L^{\gamma'}L^{1}$
\begin{equation}
\Vert e^{-it(-\Delta+V)}\varphi\Vert_{L^{p}L^{r}}\lesssim\Vert\varphi\Vert_{H^{1}}\label{eq:st2}
\end{equation}
\begin{equation}
\Vert e^{-it(-\Delta+V)}\varphi\Vert_{L^{\alpha}L^{\infty}}\lesssim\Vert\varphi\Vert_{H^{1}}\label{eq:st2-1}
\end{equation}
\begin{equation}
\Vert\int_{0}^{t}e^{-i(t-s)(-\Delta+V)}F(s)ds\Vert_{L^{\alpha}L^{\infty}}\lesssim\Vert F\Vert_{L^{q'}L^{r'}}\label{eq:st3}
\end{equation}

\begin{equation}
\Vert\int_{0}^{t}e^{-i(t-s)(-\Delta+V)}G(s)ds\Vert_{L^{p}L^{r}}\lesssim\Vert G\Vert_{L^{q'}L^{r'}}\label{eq:st4}
\end{equation}

\begin{equation}
\Vert\int_{0}^{t}e^{-i(t-s)(-\Delta+V)}H(s)ds\Vert_{L^{p}L^{r}}\lesssim\Vert H\Vert_{L^{\gamma'}L^{1}}.\label{eq:st4-1}
\end{equation}
\end{prop}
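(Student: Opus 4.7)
The plan is to derive these non-admissible estimates from the admissible Strichartz inequalities~(\ref{eq:adst}), the dispersive bound~(\ref{eq:dispest}), the one-dimensional Sobolev embedding $H^{1}(\mathbb{R})\hookrightarrow L^{\infty}(\mathbb{R})$, and the fact that $\Vert e^{-it(-\Delta+V)}\varphi\Vert_{H^{1}}\lesssim\Vert\varphi\Vert_{H^{1}}$ (a consequence of $V\geq 0$ and conservation of the linear energy). For~(\ref{eq:st2}) and~(\ref{eq:st2-1}), I would rely on the fact that in dimension one both $(4,\infty)$ and $(4(\alpha+2)/\alpha,\alpha+2)$ are admissible pairs. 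Combining~(\ref{eq:adst}) with Sobolev and $H^{1}$-conservation then yields
\[
\Vert e^{-it(-\Delta+V)}\varphi\Vert_{L^{4}L^{\infty}}+\Vert e^{-it(-\Delta+V)}\varphi\Vert_{L^{4(\alpha+2)/\alpha}L^{r}}\lesssim\Vert\varphi\Vert_{L^{2}},
\]
\[
\Vert e^{-it(-\Delta+V)}\varphi\Vert_{L^{\infty}L^{\infty}}+\Vert e^{-it(-\Delta+V)}\varphi\Vert_{L^{\infty}L^{r}}\lesssim\Vert\varphi\Vert_{H^{1}}.
\]
Since $\alpha>4$ implies $\alpha\geq 4$ and $p\geq 4(\alpha+2)/\alpha$, both~(\ref{eq:st2}) and~(\ref{eq:st2-1}) would follow from the elementary Lebesgue interpolation $\Vert f\Vert_{L^{b}}\leq\Vert f\Vert_{L^{b_{0}}}^{b_{0}/b}\Vert f\Vert_{L^{\infty}}^{1-b_{0}/b}$ applied in the time variable.

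For~(\ref{eq:st4}) the argument is direct: applying~(\ref{eq:dispest}) with $a=r$ gives the pointwise bound $\Vert e^{-i(t-s)(-\Delta+V)}G(s)\Vert_{L^{r}_{x}}\lesssim|t-s|^{-\alpha/(2(\alpha+2))}\Vert G(s)\Vert_{L^{r'}_{x}}$, and the Hardy--Littlewood--Sobolev inequality in the time variable then yields~(\ref{eq:st4}); the required scaling relation $1+\frac{1}{p}=\frac{1}{q'}+\frac{\alpha}{2(\alpha+2)}$ is an algebraic consequence of the definition of $q$.

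The main obstacle is that for~(\ref{eq:st3}) and~(\ref{eq:st4-1}) no dispersive bound of the right shape ($L^{r'}\to L^{\infty}$ or $L^{1}\to L^{r}$, with time decay) is available, so a single application of dispersive plus HLS does not suffice. I would handle both by Riesz--Thorin interpolation on the Bochner spaces $L^{p}_{t}L^{q}_{x}$. For~(\ref{eq:st4-1}), let $T$ denote the retarded Duhamel: the admissible Strichartz with input pair $(4,\infty)$ gives $T:L^{4/3}L^{1}\to L^{\infty}L^{2}$, while the dispersive bound $L^{1}\to L^{\infty}$ combined with HLS in time gives $T:L^{p_{\ast}}L^{1}\to L^{2\alpha^{2}/(\alpha+4)}L^{\infty}$ for a suitable $p_{\ast}$; complex interpolation of these two bounds at parameter $\theta=2/(\alpha+2)$ produces exactly $T:L^{\gamma'}L^{1}\to L^{p}L^{r}$, and verification of the exponent matching reduces to a direct algebraic computation from the definitions of $p$ and $\gamma$. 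For~(\ref{eq:st3}), I would invoke the duality $(L^{\alpha'}L^{1})^{*}=L^{\alpha}L^{\infty}$ (after a standard density argument to deal with the non-reflexivity of $L^{\infty}$): the estimate then becomes equivalent to a bound $T^{*}:L^{\alpha'}L^{1}\to L^{q}L^{r}$ on the formal adjoint $T^{*}G(s)=\int_{s}^{+\infty}e^{i(t-s)(-\Delta+V)}G(t)\,dt$, and this is proved by the very same complex-interpolation scheme as for~(\ref{eq:st4-1}).
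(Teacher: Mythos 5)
Your proposal is correct; I verified the exponent arithmetic at each step. In particular, $p/\tfrac{4(\alpha+2)}{\alpha}=\tfrac{\alpha^{2}}{2(\alpha+4)}\geq1$ for $\alpha\geq4$, so the time-interpolation for (\ref{eq:st2}) and (\ref{eq:st2-1}) is legitimate; the HLS relation $1+\tfrac1p=\tfrac1{q'}+\tfrac{\alpha}{2(\alpha+2)}$ for (\ref{eq:st4}) checks out with $0<\tfrac{\alpha}{2(\alpha+2)}<1$ and $q'<p$; and for (\ref{eq:st4-1}) the weight $\theta=\tfrac{2}{\alpha+2}$ on the $L^{4/3}L^{1}\to L^{\infty}L^{2}$ bound does give output $L^{p}L^{r}$ and input exponent $\tfrac{3}{2(\alpha+2)}+\tfrac{\alpha}{\alpha+2}\cdot\tfrac{\alpha^{2}+\alpha+4}{2\alpha^{2}}=\tfrac{\alpha+2}{2\alpha}=\tfrac{1}{\gamma'}$, with the HLS exponent $p_{*}=\tfrac{2\alpha^{2}}{\alpha^{2}+\alpha+4}>1$ precisely because $\alpha^{2}-\alpha-4>0$. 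However, your route is not the paper's: the paper gives no computation at all, observing only that (\ref{eq:st2})--(\ref{eq:st4}) are estimates $(3.1)$--$(3.4)$ of \cite{MR2576708} with $-\Delta+V$ in place of $H_{q}$ (their proofs using only the admissible estimates (\ref{eq:adst}), which hold here by the Goldberg--Schlag dispersive bound), and that (\ref{eq:st4-1}) falls under Theorem 1.4 of Foschi's paper on non-admissible inhomogeneous Strichartz estimates. What you have written is essentially a self-contained reconstruction of those cited proofs -- interpolation of the retarded Duhamel operator between an admissible inhomogeneous bound and a dispersive-plus-HLS bound is exactly the mechanism behind Foschi's theorem. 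The gain is independence from the references; the cost is that you must actually carry out the routine verifications you defer, namely complex interpolation of mixed-norm Bochner spaces when one slot is $L^{\infty}$ (standard via a three-lines argument on the duality pairing, or Stein interpolation) and the density/duality argument identifying the $L^{\alpha}L^{\infty}$ norm with a supremum over simple $G\in L^{\alpha'}L^{1}$, which you correctly flag. Neither is a gap, but in a final write-up they should be either executed or replaced by the citations the paper uses.
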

\begin{proof}
The estimates (\ref{eq:st2}) $-$ (\ref{eq:st4}) are exactly the
same as $(3.1)-(3.4)$ of \cite{MR2576708}, with the operator $-\Delta+V$
instead of $H_{q}$. As the proof of \cite{MR2576708} relies only
on the admissible Strichartz estimates (\ref{eq:adst}) that are given
by Proposition 1, the same proof holds here. Finally, (\ref{eq:st4-1})
enters on the frame of the non-admissible inhomogeneous Strichartz
estimates of Theorem 1.4 of Foschi's paper \cite{MMR2134950}.
\end{proof}

\subsection{Perturbative results}

We will need the three following classical perturbative results, which
follow immediatly from the previous Strichartz inequalities:
\begin{prop}
\label{pert1}Let $u\in C(H^{1})$ be a solution of (\ref{eq:nlsv}).
If $u\in L^{p}L^{r}$, then $u$ scatters in $H^{1}$. 
\end{prop}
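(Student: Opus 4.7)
The plan is to construct the forward scattering state as the limit $\psi_+ := \lim_{t\to+\infty} e^{it(-\Delta+V)}u(t)$ in $H^1$, the backward state being analogous. Writing $A = -\Delta+V$ and $F(s) = u(s)|u(s)|^\alpha$, Duhamel's formula reads
\[
e^{it_2 A}u(t_2) - e^{it_1 A}u(t_1) = -i\int_{t_1}^{t_2} e^{isA}F(s)\,ds,
\]
so it suffices to show the right-hand side is Cauchy in $H^1$ as $t_1,t_2\to+\infty$. Once the limit $\psi_+$ is identified, $\|u(t)-e^{-itA}\psi_+\|_{H^1}\to 0$ will follow from the uniform $H^1$-boundedness of $e^{-itA}$: since $V\geq 0$, $V\in L^1$, and $H^1\hookrightarrow L^\infty$ in one dimension, one has $\int V|f|^2\lesssim\|V\|_{L^1}\|f\|_{H^1}^2$, hence $\|f\|_{L^2}^2+\|A^{1/2}f\|_{L^2}^2\sim\|f\|_{H^1}^2$, and both pieces are preserved by the spectral calculus of the self-adjoint operator $A$.

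First I would bootstrap $u\in L^pL^r$ to $u\in L^\alpha L^\infty$. The H\"older identity $\|F(s)\|_{L^{r'}}=\|u(s)\|_{L^r}^{\alpha+1}$ uses $r'(\alpha+1)=r$, and $\|F\|_{L^{q'}L^{r'}}\leq \|u\|_{L^pL^r}^{\alpha+1}$ uses $q'(\alpha+1)=p$; both identities drop out of the definitions of $p$, $q$, $r$. Combining with the linear estimate (\ref{eq:st2-1}) and the Duhamel estimate (\ref{eq:st3}) applied to the integral formulation of (\ref{eq:nlsv}) yields
\[
\|u\|_{L^\alpha L^\infty}\lesssim \|\varphi\|_{H^1}+\|u\|_{L^pL^r}^{\alpha+1}<\infty,
\]
and in particular the tail $\|u\|_{L^\alpha([T,\infty))L^\infty}$ tends to $0$ as $T\to+\infty$.

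Next I would bound the Cauchy defect directly by Minkowski, exploiting the uniform $H^1$-boundedness of $e^{isA}$ noted above:
\[
\Bigl\|\int_{t_1}^{t_2}e^{isA}F(s)\,ds\Bigr\|_{H^1}\leq \int_{t_1}^{t_2}\|e^{isA}F(s)\|_{H^1}\,ds\lesssim \int_{t_1}^{t_2}\|F(s)\|_{H^1}\,ds.
\]
The pointwise bound $\|F(s)\|_{H^1}\lesssim\|u(s)\|_{L^\infty}^\alpha\|u(s)\|_{H^1}$ follows from $|F|\leq|u|^{\alpha+1}$ and $|\nabla F|\lesssim|u|^\alpha|\nabla u|$ together with a crude H\"older in $x$. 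Since $u\in L^\infty H^1$ by energy and mass conservation, this gives
\[
\int_{t_1}^{t_2}\|F(s)\|_{H^1}\,ds\lesssim \|u\|_{L^\infty H^1}\,\|u\|_{L^\alpha([t_1,t_2])L^\infty}^\alpha,
\]
which vanishes as $t_1\to+\infty$ by the previous step.

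Putting these pieces together, $e^{itA}u(t)$ is Cauchy in $H^1$ and converges to some $\psi_+\in H^1$, from which $\|u(t)-e^{-itA}\psi_+\|_{H^1}=\|e^{-itA}(e^{itA}u(t)-\psi_+)\|_{H^1}\lesssim \|e^{itA}u(t)-\psi_+\|_{H^1}\to 0$; the backward state $\psi_-$ is obtained symmetrically. There is no genuine obstacle: the only conceptual point is the uniform-in-time $H^1$-boundedness of the linear flow (which in any case underpins the $H^1$-level Strichartz estimates in Proposition \ref{strichartz}), after which the Cauchy estimate reduces to Minkowski and a product-rule H\"older on the nonlinearity, essentially as carried out in \cite{MR213495011} for the Dirac potential.
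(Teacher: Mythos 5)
Your proposal is correct and follows essentially the route the paper invokes (via Propositions 3.1--3.2 of Banica--Visciglia): bootstrap $u\in L^{p}L^{r}$ to $u\in L^{\alpha}L^{\infty}$ using the non-admissible Strichartz estimates (\ref{eq:st2-1})--(\ref{eq:st3}) and the exponent identities $r'(\alpha+1)=r$, $q'(\alpha+1)=p$, then verify the Cauchy criterion for $e^{itA}u(t)$ in $H^{1}$. Your final Minkowski step, resting on the uniform $H^{1}$-boundedness of $e^{itA}$ (the paper's estimate (\ref{eq:subiso}), valid since $V\geq0$, $V\in L^{1}$), is just the trivial $L^{1}_{t}H^{1}_{x}$ endpoint of the same Strichartz machinery, so the argument is sound and not materially different.
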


\begin{prop}
\label{pert2}There exists $\epsilon_{0}>0$, such that, for every
data $\varphi\in H^{1}$ such that $\Vert\varphi\Vert_{H^{1}}\leq\epsilon_{0}$,
the corresponding maximal solutions of (\ref{eq:nlsv}) and (\ref{eq:nls0})
both scatter in $H^{1}$.
\end{prop}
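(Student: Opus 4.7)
The plan is to run a standard small-data contraction/bootstrap argument using the non-admissible Strichartz estimates of \propref{strichartz}, and then invoke \propref{pert1} to conclude scattering. Global existence in $H^1$ is already given for (\ref{eq:nlsv}) by energy conservation (and for (\ref{eq:nls0}) by the classical theory), so the only thing to establish is that $u$ lies in $L^pL^r$ when $\|\varphi\|_{H^1}$ is small enough.

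First I would write the Duhamel formula
\[
u(t)=e^{-it(-\Delta+V)}\varphi-i\int_0^t e^{-i(t-s)(-\Delta+V)}\bigl(u|u|^\alpha\bigr)(s)\,ds,
\]
apply the $L^pL^r$ Strichartz bounds (\ref{eq:st2}) and (\ref{eq:st4}) and obtain, on any time interval $I\ni 0$,
\[
\|u\|_{L^p(I)L^r}\lesssim \|\varphi\|_{H^1}+\bigl\| u|u|^\alpha\bigr\|_{L^{q'}(I)L^{r'}}.
\]
The key algebraic check is that the nonlinearity sits in the dual Strichartz space at the right exponents: since $r=\alpha+2$, one has $(\alpha+1)r'=r$, and a direct computation shows $(\alpha+1)q'=p$. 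Therefore
\[
\bigl\| u|u|^\alpha\bigr\|_{L^{q'}L^{r'}}=\|u\|_{L^pL^r}^{\alpha+1},
\]
so that
\[
\|u\|_{L^p(I)L^r}\le C\|\varphi\|_{H^1}+C\|u\|_{L^p(I)L^r}^{\alpha+1}.
\]

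From here I would conclude by a standard continuity argument. The map $I\mapsto \|u\|_{L^p(I)L^r}$ is continuous in the endpoints and vanishes at $I=\{0\}$; choosing $\epsilon_0>0$ so small that $\|\varphi\|_{H^1}\le \epsilon_0$ forces the only accessible branch of the inequality $y\le C\epsilon_0 + Cy^{\alpha+1}$ to satisfy $y\le 2C\epsilon_0$, one propagates the bound $\|u\|_{L^p(I)L^r}\le 2C\epsilon_0$ to every compact $I\subset \mathbb{R}$, hence to all of $\mathbb{R}$. Thus $u\in L^pL^r$, and \propref{pert1} yields scattering in $H^1$. The same argument, taking $V=0$ (which is a special case of the Strichartz estimates of \propref{strichartz}), handles the solution of (\ref{eq:nls0}).

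No real obstacle is expected: this is a textbook small-data result, and the only point demanding care is the verification that the chosen Strichartz exponents satisfy the scaling identities $(\alpha+1)r'=r$ and $(\alpha+1)q'=p$, which is what makes the pair $(p,r)$ together with its dual $(q',r')$ a self-contained fixed-point setup for the nonlinearity $u|u|^\alpha$.
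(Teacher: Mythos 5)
Your proposal is correct and is essentially the same argument the paper relies on: the paper's proof simply defers to Propositions 3.1--3.2 of Banica--Visciglia, which is exactly this small-data bootstrap in $L^{p}L^{r}$ via the non-admissible Strichartz estimates (\ref{eq:st2}) and (\ref{eq:st4}), hinging on the exponent identities $(\alpha+1)r'=r$ and $(\alpha+1)q'=p$ that you verified, followed by the scattering criterion of Proposition \ref{pert1}.
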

\begin{proof}[Proof of Propositions \ref{pert1} and \ref{pert2}]
The proof is the same as for Propositions 3.1 and 3.2 of \cite{MR213495011},
using the Strichartz estimates of our Proposition \ref{strichartz}
instead of their estimates (3.1), (3.2), (3.3), (3.4).
\end{proof}
\begin{prop}
\label{pert3}For every $M>0$ there exists $\epsilon>0$ and $C>0$
such that the following occurs. Let $v\in C(H^{1})\cap L^{p}L^{r}$
be a solution of the following integral equation with source term
$e(t,x)$ 
\[
v(t)=e^{-it(\Delta-V)}\varphi-i\int_{0}^{t}e^{-i(t-s)(\Delta-V)}(v(s)|v(s)|^{\alpha})ds+e(t)
\]
 with $\Vert v\Vert_{L^{p}L^{r}}<M$ and $\Vert e\Vert_{L^{p}L^{r}}<\epsilon$.
Assume moreover that $\varphi_{0}\in H^{1}$ is such that $\Vert e^{-it(\Delta-V)}\varphi_{0}\Vert_{L^{p}L^{r}}<\epsilon$
. Then, the solution $u\in C(H^{1})$ to (\ref{eq:nlsv}) with initial
condition $\varphi+\varphi_{0}$ satisfies
\[
u\in L^{p}L^{r},\quad\Vert u-v\Vert_{L^{p}L^{r}}<C.
\]
 
\end{prop}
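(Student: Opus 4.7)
This is the standard long-time perturbation (stability) lemma of Kenig--Merle type, and I would prove it by the usual bootstrap on a finite partition of the real line chosen so that $v$ has small $L^{p}L^{r}$-norm on each piece. Set $w:=u-v$; subtracting the two integral equations gives
\begin{equation*}
w(t)=e^{-it(-\Delta+V)}\varphi_{0}-e(t)-i\int_{0}^{t}e^{-i(t-s)(-\Delta+V)}\bigl(u|u|^{\alpha}-v|v|^{\alpha}\bigr)\,ds.
\end{equation*}
The pointwise bound $\bigl|\,|u|^{\alpha}u-|v|^{\alpha}v\,\bigr|\lesssim(|u|^{\alpha}+|v|^{\alpha})|w|$ together with H\"older in space and time (the Strichartz exponents of the paper satisfy $(\alpha+1)r'=r$ and $(\alpha+1)q'=p$) yields, on any interval $J$,
\begin{equation*}
\|u|u|^{\alpha}-v|v|^{\alpha}\|_{L^{q'}(J)L^{r'}}\lesssim\bigl(\|u\|_{L^{p}(J)L^{r}}^{\alpha}+\|v\|_{L^{p}(J)L^{r}}^{\alpha}\bigr)\|w\|_{L^{p}(J)L^{r}}.
\end{equation*}
This is the basic nonlinear estimate I will feed into the inhomogeneous Strichartz bound (\ref{eq:st4}).

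\textbf{Subinterval bootstrap.} I fix a small absolute $\eta>0$ and partition $\mathbb{R}$ into consecutive intervals $I_{k}=[t_{k-1},t_{k}]$, $k=1,\ldots,N$, with $N\leq N(M,\eta)$ and $\|v\|_{L^{p}(I_{k})L^{r}}\leq\eta$ on each. On $I_{k}$, I rewrite Duhamel as
\begin{equation*}
w(t)=S_{k}(t)-i\int_{t_{k-1}}^{t}e^{-i(t-s)(-\Delta+V)}\bigl(u|u|^{\alpha}-v|v|^{\alpha}\bigr)\,ds,
\end{equation*}
where the source $S_{k}(t)$ collects $e^{-it(-\Delta+V)}\varphi_{0}$, $-e(t)$ and the past Duhamel piece $-i\int_{0}^{t_{k-1}}e^{-i(t-s)(-\Delta+V)}(u|u|^{\alpha}-v|v|^{\alpha})\,ds$. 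Applying (\ref{eq:st4}) to the remaining integral, together with the nonlinear estimate and $\|u\|_{L^{p}(I_{k})L^{r}}\leq\|w\|_{L^{p}(I_{k})L^{r}}+\eta$, I obtain
\begin{equation*}
\|w\|_{L^{p}(I_{k})L^{r}}\leq\|S_{k}\|_{L^{p}(I_{k})L^{r}}+C\bigl(\|w\|_{L^{p}(I_{k})L^{r}}^{\alpha}+\eta^{\alpha}\bigr)\|w\|_{L^{p}(I_{k})L^{r}}.
\end{equation*}
A continuity-in-time bootstrap on $\tau\mapsto\|w\|_{L^{p}([t_{k-1},\tau])L^{r}}$, which vanishes at $\tau=t_{k-1}$, closes provided $\eta$ is chosen small enough to absorb $C\eta^{\alpha}$ and $\|S_{k}\|_{L^{p}(I_{k})L^{r}}$ remains below a fixed threshold; it yields $\|w\|_{L^{p}(I_{k})L^{r}}\leq 2\|S_{k}\|_{L^{p}(I_{k})L^{r}}$.

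\textbf{Induction on $k$ and conclusion.} For $k=1$ one has $\|S_{1}\|_{L^{p}(I_{1})L^{r}}\leq\|e^{-it(-\Delta+V)}\varphi_{0}\|_{L^{p}L^{r}}+\|e\|_{L^{p}L^{r}}\leq 2\epsilon$. For $k\geq 2$ the past Duhamel piece inside $S_{k}$ can be bounded by (\ref{eq:st4}) applied with source $\mathbf{1}_{[0,t_{k-1}]}(u|u|^{\alpha}-v|v|^{\alpha})$, which after H\"older and the inductive bounds on the earlier subintervals gives $\|S_{k}\|_{L^{p}(I_{k})L^{r}}\leq 2\epsilon+B\,\max_{j<k}\|w\|_{L^{p}(I_{j})L^{r}}$ for some constant $B=B(M,\eta)$. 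Choosing $\epsilon$ small enough depending on $M$ (and hence on $N$ and $B$) so that the smallness hypothesis of each $I_{k}$-bootstrap is preserved through all $N$ steps, a finite induction gives $\|w\|_{L^{p}(I_{k})L^{r}}\leq C(M)\epsilon$ for every $k$; summing the $p$-powers over the $N$ intervals yields $\|u-v\|_{L^{p}L^{r}}\leq C(M)$, in particular $u\in L^{p}L^{r}$.

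\textbf{The hard part.} The only real technical subtlety is that $(p,r)$ is not a Strichartz-admissible pair, so the linear evolution of $w(t_{k-1})$ (which is only in $H^{1}$, with no smallness) cannot be controlled in $L^{p}(I_{k})L^{r}$ via an $L^{2}$-norm: estimate (\ref{eq:st2}) would give only a bound by the full, non-small $H^{1}$-norm. This is what forces the bookkeeping above to carry the \emph{entire} past Duhamel integral as a source $S_{k}$ rather than resetting it at $t_{k-1}$ to an $H^{1}$-datum, so that the only $L^{p}L^{r}$ Strichartz estimate ever invoked is the inhomogeneous one (\ref{eq:st4}), whose input is precisely the small $L^{q'}L^{r'}$-norm of the nonlinearity already bounded by H\"older.
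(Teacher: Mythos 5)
Your proof is correct and follows essentially the same route as the paper, whose proof simply invokes the long-time perturbation argument of Proposition 4.7 of \cite{MR2838120} with the non-admissible inhomogeneous estimate (\ref{eq:st4}) in place of their inequality (4.3): that argument is precisely the subinterval decomposition and induction you carry out, including the key bookkeeping point that the difference at each restart time is handled through the past Duhamel term (estimated via (\ref{eq:st4}) and H\"older with $(\alpha+1)q'=p$, $(\alpha+1)r'=r$) rather than via the $H^{1}$-based estimate (\ref{eq:st2}). No gaps worth noting.
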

\begin{proof}
It is the same as for Proposition 4.7 in \cite{MR2838120}, using
Strichartz estimates (\ref{eq:st4}) instead of Strichartz-type inequality
(4.3) of their paper.
\end{proof}

\section{Profile decomposition}

The aim of this section is to show that we can use the abstract profile
decomposition obtained by \cite{MR213495011}, and inspired by \cite{MR3017992}:
\begin{thm*}[Astract profile decomposition, \cite{MR213495011}]
 Let $A:L^{2}\supset D(A)\rightarrow L^{2}$ be a self adjoint operator
such that:

\begin{itemize}
\item for some positive constants $c,C$ and for all $u\in D(A)$, 
\begin{equation}
c\Vert u\Vert_{H^{1}}^{2}\leq(Au,u)+\Vert u\Vert_{L^{2}}^{2}\leq C\Vert u\Vert_{H^{1}}^{2},\label{eq:ab1}
\end{equation}
\item let $B:D(A)\times D(A)\ni(u,v)\rightarrow(Au,v)+(u,v)_{L^{2}}-(u,v)_{H^{1}}\in\mathbb{C}$.
Then, as $n$ goes to infinity
\begin{equation}
B(\tau_{x_{n}}\psi,\tau_{x_{n}}h_{n})\rightarrow0\quad\forall\psi\in H^{1}\label{eq:ab2}
\end{equation}
as soon as
\[
x_{n}\rightarrow\pm\infty,\quad\sup\Vert h_{n}\Vert_{H^{1}}<\infty
\]
or
\[
x_{n}\rightarrow\bar{x}\in\mathbb{R},\quad h_{n}\underset{H^{1}}{\rightharpoonup}0,
\]
\item let $(t_{n})_{n\geq1}$, $(x_{n})_{n\geq1}$ be sequences of real
numbers, and $\bar{t},\bar{x}\in\mathbb{R}$. Then
\begin{equation}
|t_{n}|\rightarrow\infty\Longrightarrow\Vert e^{it_{n}A}\tau_{x_{n}}\psi\Vert_{L^{p}}\rightarrow0,\quad\forall2<p<\infty,\ \forall\psi\in H^{1}\label{eq:ab3}
\end{equation}
\begin{equation}
t_{n}\rightarrow\bar{t},\ x_{n}\rightarrow\pm\infty\Longrightarrow\forall\psi\in H^{1},\:\exists\varphi\in H^{1},\quad\tau_{-x_{n}}e^{it_{n}A}\tau_{x_{n}}\psi\overset{H^{1}}{\rightarrow}\varphi\label{eq:ab4}
\end{equation}
\begin{equation}
t_{n}\rightarrow\bar{t},\ x_{n}\rightarrow\bar{x}\Longrightarrow\forall\psi\in H^{1},\quad e^{it_{n}A}\tau_{x_{n}}\psi\overset{H^{1}}{\rightarrow}e^{i\bar{t}A}\tau_{\bar{x}}\psi.\label{eq:ab5}
\end{equation}
\end{itemize}
And let $(u_{n})_{n\geq1}$ be a bounded sequence in $H^{1}$. Then,
up to a subsequence, the following decomposition holds
\[
u_{n}=\sum_{j=1}^{J}e^{it_{j}^{n}A}\tau_{x_{n}^{j}}\psi_{j}+R_{n}^{J}\quad\forall J\in\mathbb{N}
\]
where
\[
t_{j}^{n}\in\mathbb{R},\;x_{j}^{n}\in\mathbb{R},\;\psi_{j}\in H^{1}
\]
are such that

\begin{itemize}
\item for any fixed $j$,
\begin{equation}
t_{j}^{n}=0\;\forall n,\quad\text{or}\quad t_{n}^{j}\overset{n\rightarrow\infty}{\rightarrow}\pm\infty\label{eq:dp1}
\end{equation}
\begin{equation}
x_{j}^{n}=0\;\forall n,\quad\text{or\quad}x_{n}^{j}\overset{n\rightarrow\infty}{\rightarrow}\pm\infty,\label{eq:dep2}
\end{equation}
\item orthogonality of the parameters:
\begin{equation}
|t_{j}^{n}-t_{k}^{n}|+|x_{j}^{n}-x_{k}^{n}|\overset{n\rightarrow\infty}{\rightarrow}\infty,\quad\forall j\neq k,\label{eq:dp3}
\end{equation}
\item decay of the reminder:
\begin{equation}
\forall\epsilon>0,\exists J\in\mathbb{N},\quad\limsup_{n\rightarrow\infty}\Vert e^{-itA}R_{n}^{J}\Vert_{L^{\infty}L^{\infty}}\leq\epsilon,\label{eq:dp4}
\end{equation}
 
\item orthogonality of the Hilbert norm:
\begin{equation}
\Vert u_{n}\Vert_{L^{2}}^{2}=\sum_{j=1}^{J}\Vert\psi_{j}\Vert_{L^{2}}^{2}+\Vert R_{n}^{J}\Vert_{L^{2}}^{2}+o_{n}(1),\quad\forall J\in\mathbb{N}\label{eq:dp5}
\end{equation}
\begin{equation}
\Vert u_{n}\Vert_{H}^{2}=\sum_{j=1}^{J}\Vert\tau_{x_{n}^{j}}\psi_{j}\Vert_{H}^{2}+\Vert R_{n}^{J}\Vert_{H}^{2}+o_{n}(1),\quad\forall J\in\mathbb{N}\label{eq:dp6}
\end{equation}
 where $(u,v)_{H}=(Au,v)$, and
\begin{equation}
\Vert u_{n}\Vert_{L^{p}}^{p}=\sum_{j=1}^{J}\Vert e^{it_{j}^{n}A}\tau_{x_{n}^{j}}\psi_{j}\Vert_{L^{p}}^{p}+\Vert R_{n}^{J}\Vert_{L^{p}}^{p}+o_{n}(1),\quad\forall2<p<\infty,\quad\forall J\in\mathbb{N}.\label{eq:dp7}
\end{equation}
\end{itemize}
\end{thm*}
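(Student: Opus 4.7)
The approach is the standard concentration-compactness iteration of Bahouri--Gérard and Keraani, carried out purely from the abstract hypotheses (\ref{eq:ab1})--(\ref{eq:ab5}) on $A$. The core ingredient is a \emph{one-profile extraction lemma}: given a sequence $(v_n)$ bounded in $H^1$ with
\[
\delta := \limsup_n \|e^{-itA} v_n\|_{L^\infty L^\infty} > 0,
\]
there exist, up to a subsequence, parameters $(t_n, x_n)$ and a profile $\psi \in H^1$ with $\|\psi\|_{H^1} \gtrsim \delta$ such that $\tau_{-x_n} e^{-it_n A} v_n \rightharpoonup \psi$ weakly in $H^1$. To obtain this, I would pick $(t_n, x_n)$ almost maximizing $|e^{-it_n A}v_n(x_n)|$, extract subsequences on which each of $t_n$ and $x_n$ either converges to a finite limit or diverges, and use (\ref{eq:ab4})--(\ref{eq:ab5}) to absorb any bounded limits into $\psi$, producing exactly the dichotomy (\ref{eq:dp1})--(\ref{eq:dep2}). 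Hypothesis (\ref{eq:ab1}) is then used throughout to convert between the $A$-energy and the $H^1$ norm; in particular, to bound $\|\psi\|_{H^1}$ from below by $\delta$.

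I would then iterate: setting $R_n^0 = u_n$, and having produced profiles $\psi_1,\dots,\psi_J$ with parameters $(t_j^n, x_j^n)$, apply the extraction lemma to $R_n^J := u_n - \sum_{j=1}^J e^{it_j^n A}\tau_{x_j^n}\psi_j$ to produce $\psi_{J+1}$. The bilinear hypothesis (\ref{eq:ab2}) ensures that the $H$-inner product of two concentrating profiles vanishes asymptotically, yielding the Pythagorean identity (\ref{eq:dp6}); via (\ref{eq:ab1}) this forces $\sum_j \|\psi_j\|_{H^1}^2 \lesssim \limsup_n \|u_n\|_{H^1}^2$. Hence $\|\psi_j\|_{H^1} \to 0$, and combined with the quantitative lower bound $\|\psi_{J+1}\|_{H^1} \gtrsim \delta_{J+1}$ coming from the extraction, this forces $\delta_J := \limsup_n \|e^{-itA} R_n^J\|_{L^\infty L^\infty} \to 0$, which is exactly (\ref{eq:dp4}). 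Orthogonality of parameters (\ref{eq:dp3}) is proved by contradiction: if $|t_j^n - t_k^n| + |x_j^n - x_k^n|$ remained bounded for some $j < k$, then (\ref{eq:ab4})--(\ref{eq:ab5}) would show that the weak limit defining $\psi_k$ cannot be orthogonal to $\psi_j$ in the way required by the construction. The $L^2$ decomposition (\ref{eq:dp5}) follows from these orthogonalities together with the $L^2$-isometry of $e^{itA}$ and $\tau$.

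The main obstacle is the $L^p$ decomposition (\ref{eq:dp7}) for $2 < p < \infty$, which must be established without any explicit representation of $e^{-itA}$. Two cases arise for each pair of distinct profiles: when $|t_j^n - t_k^n| \to \infty$, the dispersive hypothesis (\ref{eq:ab3}) forces one of the two terms to vanish in $L^p$, killing the cross product after a Hölder estimate; when $|t_j^n - t_k^n|$ stays bounded and $|x_j^n - x_k^n| \to \infty$, (\ref{eq:ab4})--(\ref{eq:ab5}) reduce matters to the standard asymptotic $L^p$-orthogonality of two $H^1$-bumps with diverging centers. Converting the pointwise smallness (\ref{eq:dp4}) of the remainder into the $L^p$-decomposition of $\|u_n\|_{L^p}$ is then handled term by term, interpolating the $L^\infty L^\infty$ smallness against the uniform $L^\infty L^2$ bound afforded by (\ref{eq:ab1}) and the $L^2$-isometry of $e^{-itA}$. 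This last step is the subtlest, since it must proceed solely through the five abstract hypotheses, with no kernel-level information on $e^{-itA}$ available.
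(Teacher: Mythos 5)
The paper offers no proof of this theorem to compare against: it is imported verbatim from \cite{MR213495011}, and the paper's only contribution to it is Proposition \ref{prop:prof_assumpt}, which checks the hypotheses (\ref{eq:ab1})--(\ref{eq:ab5}) for $A=-\Delta+V$. Your sketch is essentially the route of the cited source: the Bahouri--G\'erard/Keraani iteration with concentration functional $\limsup_{n}\Vert e^{-itA}v_{n}\Vert_{L^{\infty}L^{\infty}}$, the lower bound $\Vert\psi\Vert_{H^{1}}\gtrsim\delta$ from the one-dimensional embedding $H^{1}\hookrightarrow L^{\infty}$ together with local compactness, absorption of convergent parameters through (\ref{eq:ab4})--(\ref{eq:ab5}) to get the dichotomies (\ref{eq:dp1})--(\ref{eq:dep2}), energy decoupling through (\ref{eq:ab2}), exhaustion of the concentration levels via the Pythagorean bound to get (\ref{eq:dp4}), and $L^{p}$ decoupling via (\ref{eq:ab3}) and spatial separation; this is the standard and correct outline. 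Two steps are left schematic and would need care in a full write-up: the orthogonality (\ref{eq:dp3}), and the weak vanishing of transformed remainders it feeds on, requires passing weak limits through operators of the form $\tau_{-x_{k}^{n}}e^{i(t_{j}^{n}-t_{k}^{n})A}\tau_{x_{j}^{n}}$, which is done by testing against a fixed function and invoking (\ref{eq:ab4})--(\ref{eq:ab5}) on the adjoint side; and in (\ref{eq:dp7}), which is asserted for every fixed $J$, the profile--remainder cross terms cannot be handled by interpolating (\ref{eq:dp4}) against the $L^{2}$ bound (for fixed $J$ the remainder is not small in $L^{p}$), but follow instead from $\tau_{-x_{j}^{n}}e^{-it_{j}^{n}A}R_{n}^{J}\rightharpoonup0$ combined with local compactness, or from (\ref{eq:ab3}) when $|t_{j}^{n}|\rightarrow\infty$. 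Neither point breaks your argument; they are gaps of detail rather than of ideas, and the approach matches the one actually used in \cite{MR213495011}.
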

We will see that the self-adjoint operator $A:=-\Delta+V$ verifies
the hypothesis of the previous theorem.
\begin{prop}
\label{prop:prof_assumpt}Let $A:=-\Delta+V$. Then $A$ satisfies
the assumptions (\ref{eq:ab1}), (\ref{eq:ab2}),(\ref{eq:ab3}),(\ref{eq:ab4}),(\ref{eq:ab5}).
\end{prop}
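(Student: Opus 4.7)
My plan is to dispose of (\ref{eq:ab1}), (\ref{eq:ab2}), (\ref{eq:ab3}) and (\ref{eq:ab5}) by the standard arguments permitted by the hypotheses on $V$, and then concentrate on (\ref{eq:ab4}), which I expect to be the only substantial point. For (\ref{eq:ab1}), since
\[
(Au,u)+\|u\|_{L^{2}}^{2}=\|u'\|_{L^{2}}^{2}+\int V|u|^{2}+\|u\|_{L^{2}}^{2},
\]
the lower inequality is immediate from $V\geq 0$, and the upper inequality follows from $\int V|u|^{2}\leq\|V\|_{L^{1}}\|u\|_{L^{\infty}}^{2}\lesssim\|V\|_{L^{1}}\|u\|_{H^{1}}^{2}$ via the one dimensional Sobolev embedding $H^{1}\hookrightarrow L^{\infty}$. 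I will also record that $V'\in L^{1}$ forces $V\in L^{\infty}$, to be used repeatedly below. For (\ref{eq:ab5}), I would combine continuity of translation in $H^{1}$ with the strong continuity of the unitary group $e^{itA}$ on $L^{2}$, upgraded to $H^{1}$ via the equivalence established in (\ref{eq:ab1}) and the conservation of the quadratic form $(A\cdot,\cdot)+\|\cdot\|_{L^{2}}^{2}$.

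For (\ref{eq:ab2}), after computing $B(u,v)=\int V u\bar{v}$ and changing variables, the integral to study is $\int V(y+x_{n})\psi(y)\overline{h_{n}(y)}\,dy$. In the first regime ($x_{n}\to\pm\infty$, $\sup\|h_{n}\|_{H^{1}}<\infty$) I would split $\mathbb{R}$ into $\{|y|\leq R\}$ and $\{|y|>R\}$: on the former, $\int_{x_{n}-R}^{x_{n}+R}|V|\to 0$ by absolute continuity of the integral, while on the latter the contribution is controlled by $\sup_{|y|>R}|\psi(y)|$, which vanishes as $R\to\infty$ since $\psi\in H^{1}\hookrightarrow C_{0}(\mathbb{R})$. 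In the second regime ($x_{n}\to\bar{x}$, $h_{n}\rightharpoonup 0$ in $H^{1}$), continuity of translation on $L^{1}$ gives $V(\cdot+x_{n})\psi\to V(\cdot+\bar{x})\psi$ in $L^{1}$, while the Rellich embedding in dimension one forces $h_{n}\to 0$ uniformly on compact sets, so an analogous split closes the proof. For (\ref{eq:ab3}) I would approximate $\psi\in H^{1}$ by $\tilde{\psi}\in H^{1}\cap L^{1}$: the dispersive estimate (\ref{eq:linflun}) together with mass conservation and $L^{2}$\,--\,$L^{\infty}$ interpolation give $\|e^{it_{n}A}\tau_{x_{n}}\tilde{\psi}\|_{L^{p}}\to 0$ for every $2<p<\infty$, and the error $\psi-\tilde{\psi}$ is absorbed using the uniform $H^{1}$-boundedness of $e^{it_{n}A}\tau_{x_{n}}$ provided by (\ref{eq:ab1}) together with $H^{1}\hookrightarrow L^{p}$.

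The hard part is (\ref{eq:ab4}). My plan is to conjugate: setting $A_{n}:=-\Delta+V(\cdot+x_{n})$ one has $\tau_{-x_{n}}e^{it_{n}A}\tau_{x_{n}}\psi=e^{it_{n}A_{n}}\psi=:\Phi_{n}$, with the natural candidate limit $\varphi:=e^{-i\bar{t}\Delta}\psi$. To obtain $L^{2}$ convergence $\Phi_{n}\to\varphi$ I would establish the strong resolvent convergence $A_{n}\to -\Delta$ via the second resolvent identity
\[
(A_{n}-i)^{-1}-(-\Delta-i)^{-1}=-(A_{n}-i)^{-1}V(\cdot+x_{n})(-\Delta-i)^{-1},
\]
noting that for $f\in L^{2}$ the function $g:=(-\Delta-i)^{-1}f$ lies in $H^{2}\hookrightarrow C_{0}(\mathbb{R})$, so $V(\cdot+x_{n})g\to 0$ in $L^{2}$ by dominated convergence with dominating function $\|g\|_{L^{\infty}}^{2}V^{2}\in L^{1}$; the Trotter-Kato theorem then yields the desired $L^{2}$ convergence. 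Since $\Phi_{n}$ is uniformly bounded in $H^{1}$ (by (\ref{eq:ab1}) applied to $A_{n}$ and conservation of the $A_{n}$-form), this upgrades automatically to weak $H^{1}$ convergence to $\varphi$. The final upgrade to strong $H^{1}$ convergence, which is the crux, I would derive from the norm convergence $\|\Phi_{n}\|_{H^{1}}\to\|\varphi\|_{H^{1}}$. This in turn follows from conservation of $(A_{n}\Phi_{n},\Phi_{n})+\|\Phi_{n}\|_{L^{2}}^{2}=(A_{n}\psi,\psi)+\|\psi\|_{L^{2}}^{2}$ combined with the vanishing of both $\int V(\cdot+x_{n})|\psi|^{2}$ and $\int V(\cdot+x_{n})|\Phi_{n}|^{2}$: the former by dominated convergence since $\psi\in C_{0}$, and the latter similarly after replacing $|\Phi_{n}|^{2}$ by $|\varphi|^{2}$ up to an $L^{1}$-error that tends to zero because the $L^{2}$ convergence of $\Phi_{n}$ and its uniform $L^{\infty}$ bound yield $L^{p}$ convergence for every $p\in[2,\infty)$.
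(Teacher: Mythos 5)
Your treatment of (\ref{eq:ab1}), (\ref{eq:ab2}), (\ref{eq:ab3}) and (\ref{eq:ab5}) is correct and essentially identical to the paper's (same Sobolev/positivity argument, same splitting of the $V$-integral into a region near $x_n$ and its complement, same approximation by nice data plus the dispersive estimate and the $H^{1}$-boundedness of $e^{itA}$ coming from the conserved form norm). Where you genuinely diverge is (\ref{eq:ab4}), and your route works: conjugating to $A_{n}=-\Delta+\tau_{-x_{n}}V$, you get $L^{2}$ convergence of $e^{it_{n}A_{n}}\psi$ to $e^{-i\bar{t}\Delta}\psi$ from strong resolvent convergence (second resolvent identity, $V\in L^{2}$ since $V\in W^{1,1}\hookrightarrow L^{\infty}$) and Trotter--Kato, and then upgrade to strong $H^{1}$ convergence by combining the uniform $H^{1}$ bound, weak convergence, and convergence of the form norms, the potential terms $\int\tau_{-x_{n}}V\,|\psi|^{2}$ and $\int\tau_{-x_{n}}V\,|e^{it_{n}A_{n}}\psi|^{2}$ vanishing by domination. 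The paper instead treats the difference $e^{it_{n}A}\tau_{x_{n}}\psi-e^{-it_{n}\Delta}\tau_{x_{n}}\psi$ as a solution of a linear Schr\"odinger equation with source $Ve^{-it\Delta}\tau_{x_{n}}\psi$ and closes with an inhomogeneous Strichartz estimate plus the tail estimate $\Vert(\tau_{-x_{n}}V)e^{-it\Delta}\psi\Vert_{L^{\infty}(0,\bar{t}+1)W^{1,1}}\rightarrow0$. Your operator-theoretic argument is softer (no Strichartz input for this step, and $V'\in L^{1}$ enters only through $W^{1,1}\hookrightarrow L^{\infty}$), but the paper's computation has a side benefit you lose: the estimate $\Vert(\tau_{-x_{n}}V)e^{-it\Delta}\psi\Vert_{L^{\infty}(0,T)L^{1}}\rightarrow0$ proved there is quoted verbatim in the proofs of Propositions \ref{prop:decay_lin} and \ref{prop:decay_duhamel}, so on your route that estimate would still have to be established separately. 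Two small repairs: the vanishing of $\int_{x_{n}-R}^{x_{n}+R}|V|$ is tail decay of an $L^{1}$ function (via dominated convergence), not absolute continuity of the integral, which concerns sets of small measure; and since the time is $t_{n}\rightarrow\bar{t}$ rather than fixed, you should invoke Trotter--Kato in its locally-uniform-in-time form (or add an equicontinuity argument on $H^{2}$ data, using $\sup_{n}\Vert A_{n}\tilde{\psi}\Vert_{L^{2}}<\infty$), which is standard but worth saying.
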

\begin{proof}
\textbf{Assumption (\ref{eq:ab1}).} Because $V$ is positive and
by the Sobolev embedding $H^{1}(\mathbb{R})\hookrightarrow L^{\infty}$,
\[
\Vert u\Vert_{H^{1}}^{2}\leq(Au,u)+\Vert u\Vert_{L^{2}}=\int|\nabla u|^{2}+\int V|u|^{2}+\int|u|^{2}\leq(1+\Vert V\Vert_{L^{1}})\Vert u\Vert_{H^{1}}^{2}
\]
and (\ref{eq:ab1}) holds.

\textbf{Assumption (\ref{eq:ab2}).} We have
\[
B(\tau_{x_{n}}\psi,\tau_{x_{n}}h_{n})=\int V\tau_{x_{n}}\psi\overline{\tau_{x_{n}}h_{n}}.
\]
If $x_{n}\rightarrow\bar{x}\in\mathbb{R},\:h_{n}\underset{H^{1}}{\rightharpoonup}0$,
then $\tau_{x_{n}}\psi\rightarrow\tau_{\bar{x}}\psi$ strongly in
$L^{2}$ and $V\tau_{x_{n}}h_{n}\rightharpoonup0$ weakly in $L^{2}$
(indeed, note that $V\in W^{1,1}(\mathbb{R})\hookrightarrow L^{2}$),
so $B(\tau_{x_{n}}\psi,\tau_{x_{n}}h_{n})\rightarrow0$. Now, let
us assume that $x_{n}\rightarrow\pm\infty$ and $\sup\Vert h_{n}\Vert_{H^{1}}<\infty$.
For example assume that $x_{n}\rightarrow+\infty$. $\psi\in H^{1}(\mathbb{R})$
and therefore decays at infinity: $\epsilon>0$ been fixed, we can
choose $\varLambda>0$ large enough so that 
\[
\sup_{|x|\geq\varLambda}|\psi(x)|\leq\epsilon.
\]
Because $V\in L^{1}$, $\varLambda$ can also be choosen large enough
so that
\[
\int_{|x|\geq\varLambda}|V|\leq\epsilon.
\]
Then, by the Cauchy-Schwarz inequality, and because of the Sobolev
embedding $H^{1}(\mathbb{R})\hookrightarrow L^{\infty}$
\begin{multline*}
|B(\tau_{x_{n}}\psi,\tau_{x_{n}}h_{n})|\leq\Vert h_{n}\Vert_{L^{_{\infty}}}\int|V\tau_{x_{n}}\psi|\\
\leq\sup_{j\geq1}\Vert h_{j}\Vert_{H^{1}}\left(\int_{|x-x_{n}|\geq\varLambda}|V\psi(\cdot-x_{n})|+\int_{|x-x_{n}|\leq\varLambda}|V\psi(\cdot-x_{n})|\right).
\end{multline*}
Now, let $n_{0}$ be large enough so that for all $n\geq n_{0}$,
$x_{n}\geq2\varLambda$. Then, for all $n\geq n_{0}$ 
\[
|x-x_{n}|\leq\varLambda\Rightarrow|x|\geq\varLambda
\]
and, for all $n\geq n_{0}$
\[
|B(\tau_{x_{n}}\psi,\tau_{x_{n}}h_{n})|\leq M\left(\epsilon\Vert V\Vert_{L^{1}}+\epsilon\Vert\psi\Vert_{L^{\infty}}\right)
\]
so (\ref{eq:ab2}) holds.

\textbf{Assumption (\ref{eq:ab3}).} It is an immediate consequence
of the dispersive estimate and the translation invariance of the $L^{p}$
norms. Indeed, because $H_{0}^{1}(\mathbb{R})=H^{1}(\mathbb{R})$,
if $\epsilon>0$, there exists a $C^{\infty}$, compactly supported
function $\tilde{\psi}$ such that 
\begin{equation}
\Vert\tilde{\psi}-\psi\Vert_{H^{1}}\leq\epsilon.\label{eq:ass31}
\end{equation}
But $\tilde{\psi}\in L^{p'}$, so by the dispersive estimate (\ref{eq:dispest})
\[
\Vert e^{it_{n}A}\tau_{x_{n}}\tilde{\psi}\Vert_{L^{p}}\lesssim\frac{1}{|t_{n}|^{\frac{1}{2}(\frac{1}{p'}-\frac{1}{p})}}\Vert\tau_{x_{n}}\tilde{\psi}\Vert_{L^{p'}}=\frac{1}{|t_{n}|^{\frac{1}{2}(\frac{1}{p'}-\frac{1}{p})}}\Vert\tilde{\psi}\Vert_{L^{p'}}\rightarrow0
\]
as $n\rightarrow\infty$. Therefore, for $n$ big enough
\begin{equation}
\Vert e^{it_{n}A}\tau_{x_{n}}\tilde{\psi}\Vert_{L^{p}}\leq\epsilon.\label{eq:ass32}
\end{equation}
To achieve the proof, note that $e^{itA}f$ verifies
\begin{equation}
\Vert e^{itA}f\Vert_{H^{1}}\lesssim\Vert f\Vert_{H^{1}}.\label{eq:subiso}
\end{equation}
Indeed, as $V$ is positive and in $L^{1}$, by the Sobolev embedding
$H^{1}(\mathbb{R})\hookrightarrow L^{\infty}$ we get
\[
\Vert\nabla f\Vert_{L^{2}}^{2}\leq\Vert(-\Delta+V)^{\frac{1}{2}}f\Vert_{L^{2}}^{2}=\int|\nabla u|^{2}+\int V|u|^{2}\lesssim\Vert f\Vert_{H^{1}}.
\]
So, as $e^{itA}$ commute with $(-\Delta+V)^{\frac{1}{2}}$ and is
an isometry on $L^{2}$, 
\begin{multline*}
\Vert e^{itA}f\Vert_{H^{1}}^{2}\leq\Vert e^{itA}f\Vert_{L^{2}}^{2}+\Vert(-\Delta+V)^{\frac{1}{2}}e^{itA}f\Vert_{L^{2}}^{2}\\
=\Vert e^{itA}f\Vert_{L^{2}}^{2}+\Vert e^{itA}(-\Delta+V)^{\frac{1}{2}}f\Vert_{L^{2}}^{2}\\
=\Vert f\Vert_{L^{2}}^{2}+\Vert(-\Delta+V)^{\frac{1}{2}}f\Vert_{L^{2}}^{2}\lesssim\Vert f\Vert_{H^{1}}^{2}.
\end{multline*}
Now, because of the Sobolev embedding $H^{1}\hookrightarrow L^{p}$
we obtain using (\ref{eq:ass31}), (\ref{eq:ass32}) and (\ref{eq:subiso}),
for $n$ big enough 
\begin{eqnarray*}
\Vert e^{it_{n}A}\tau_{x_{n}}\psi\Vert_{L^{p}} & \leq & \Vert e^{it_{n}A}\tau_{x_{n}}(\psi-\tilde{\psi})\Vert_{L^{p}}+\Vert e^{it_{n}A}\tau_{x_{n}}\tilde{\psi}\Vert_{L^{p}}\\
 & \lesssim & \Vert e^{it_{n}A}\tau_{x_{n}}(\psi-\tilde{\psi})\Vert_{H^{1}}+\Vert e^{it_{n}A}\tau_{x_{n}}\tilde{\psi}\Vert_{L^{p}}\\
 & \lesssim & \Vert\psi-\tilde{\psi}\Vert_{H^{1}}+\Vert e^{it_{n}A}\tau_{x_{n}}\tilde{\psi}\Vert_{L^{p}}\leq2\epsilon
\end{eqnarray*}
which achieves the proof of (\ref{eq:ab3}).

\textbf{Assumption (\ref{eq:ab4}).} We will show that
\[
t_{n}\rightarrow\bar{t},\:x_{n}\rightarrow\pm\infty\;\Rightarrow\:\Vert\tau_{-x_{n}}e^{it_{n}(-\Delta+V)}\tau_{x_{n}}\psi-e^{-i\bar{t}\Delta}\psi\Vert_{H^{1}}\rightarrow0
\]
and hence (\ref{eq:ab4}) will hold with $\varphi=e^{-i\bar{t}\Delta}\psi$.
As $\tau_{x_{n}}$ is an $H^{1}$ isometry and commute with $e^{-i\bar{t}\Delta}$,
it is sufficient to show that, if $t_{n}\rightarrow\bar{t}$ and $x_{n}\rightarrow\pm\infty$,
we have
\[
\Vert e^{it_{n}(-\Delta+V)}\tau_{x_{n}}\psi-e^{-i\bar{t}\Delta}\tau_{x_{n}}\psi\Vert_{H^{1}}\rightarrow0.
\]
For example, if $x_{n}\rightarrow+\infty$. Let us first remark that,
as $\tau_{x_{n}}$ commutes with $e^{-i\bar{t}\Delta}$ and $e^{-it_{n}\Delta}$
, is an $H^{1}$ isometry, and because $e^{-it\Delta}\psi\in C(H^{1})$
\[
\Vert e^{-i\bar{t}\Delta}\tau_{x_{n}}\psi-e^{-it_{n}\Delta}\tau_{x_{n}}\psi\Vert_{H^{1}}=\Vert e^{-i\bar{t}\Delta}\psi-e^{-it_{n}\Delta}\psi\Vert_{H^{1}}\rightarrow0.
\]
Hence, decomposing
\begin{multline*}
e^{it_{n}(-\Delta+V)}\tau_{x_{n}}\psi-e^{-i\bar{t}\Delta}\tau_{x_{n}}\psi=\left(e^{it_{n}(-\Delta+V)}\tau_{x_{n}}\psi-e^{-it_{n}\Delta}\tau_{x_{n}}\psi\right)\\
+\left(e^{-it_{n}\Delta}\tau_{x_{n}}\psi-e^{-i\bar{t}\Delta}\tau_{x_{n}}\psi\right)
\end{multline*}
 we see that it is sufficent to show that 
\begin{equation}
\Vert e^{it_{n}(-\Delta+V)}\tau_{x_{n}}\psi-e^{-it_{n}\Delta}\tau_{x_{n}}\psi\Vert_{H^{1}}\rightarrow0.\label{eq:l1}
\end{equation}

Note that $e^{-it\Delta}\tau_{x_{n}}\psi-e^{it(-\Delta+V)}\tau_{x_{n}}\psi$
is a solution of the following linear Schrödinger equation with zero
initial data
\[
i\partial_{t}u-\Delta u+Vu=Ve^{-it\Delta}\tau_{x_{n}}\psi.
\]
Therefore, by the inhomogenous Strichartz estimates, as $(4,\infty)$
is admissible in dimension one, and because the translation operator
commutes with $e^{-it\Delta}$, we have for $n$ large enough so that
$t_{n}\in(0,\bar{t}+1)$ 
\begin{multline*}
\Vert e^{it_{n}(-\Delta+V)}\tau_{x_{n}}\psi-e^{-it_{n}\Delta}\tau_{x_{n}}\psi\Vert_{L^{2}}\\
\leq\Vert e^{it(-\Delta+V)}\tau_{x_{n}}\psi-e^{-it\Delta}\tau_{x_{n}}\psi\Vert_{L^{\infty}(0,\bar{t}+1)L^{2}}\leq\Vert Ve^{-it\Delta}\tau_{x_{n}}\psi\Vert_{L^{\frac{4}{3}}(0,\bar{t}+1)L^{1}}\\
=\Vert(\tau_{-x_{n}}V)e^{-it\Delta}\psi\Vert_{L^{\frac{4}{3}}(0,\bar{t}+1)L^{1}}\leq(\bar{t}+1)^{\frac{3}{4}}\Vert(\tau_{-x_{n}}V)e^{-it\Delta}\psi\Vert_{L^{\infty}(0,\bar{t}+1)L^{1}}.
\end{multline*}
Hence, estimating in the same manner the gradient of these quantities,
it is sufficient to obtain (\ref{eq:l1}) to show that, as $n$ goes
to infinity
\begin{equation}
\Vert(\tau_{-x_{n}}V)e^{-it\Delta}\psi\Vert_{L^{\infty}(0,\bar{t}+1)W^{1,1}}\rightarrow0.\label{eq:vw11}
\end{equation}

Let us fix $\epsilon>0$. $e^{-it\Delta}\psi\in C([0,\bar{t}+1],H^{1})$
and the functions of $H^{1}(\mathbb{R})$ vanish at infinity, so,
using the compacity in time, there exists $\varLambda>0$ such that
\[
\Vert e^{-it\Delta}\psi\Vert_{L^{\infty}(0,\bar{t}+1)L^{\infty}(|x|\geq\varLambda)}\leq\epsilon.
\]
On the other hand, as $V\in L^{1}$, $\varLambda$ can also be taken
large enough so that
\[
\int_{|x|\ge\varLambda}|V(x)|dx\leq\epsilon.
\]
Let $n_{0}$ be large enough so that for all $n\geq n_{0}$, $x_{n}\geq2\varLambda$.
Then, for $n\geq n_{0}$ 
\[
|x+x_{n}|\leq\varLambda\Rightarrow|x|\geq\varLambda
\]
and for all $t\in(0,\bar{t}+1)$ and all $n\geq n_{0}$ we obtain
\begin{eqnarray*}
\Vert(\tau_{-x_{n}}V)e^{-it\Delta}\psi\Vert_{L^{1}} & = & \int_{|x+x_{n}|\geq\varLambda}|V(\cdot+x_{n})e^{-it\Delta}\psi|+\int_{|x+x_{n}|\leq\varLambda}|V(\cdot+x_{n})e^{-it\Delta}\psi|\\
 & \leq & \epsilon\Vert e^{-it\Delta}\psi\Vert_{L^{\infty}(0,\bar{t}+1)L^{\infty}}+\epsilon\Vert V\Vert_{L^{1}}\\
 & \leq & C(\bar{t},\psi,V)\epsilon
\end{eqnarray*}
thus $\Vert(\tau_{-x_{n}}V)e^{-it\Delta}\psi\Vert_{L^{\infty}(0,\bar{t}+1)L^{1}}\rightarrow0.$
With the same argument, because $V'\in L^{1}$, we can show that $\Vert(\tau_{-x_{n}}V)'e^{-it\Delta}\psi\Vert_{L^{\infty}(0,\bar{t}+1)L^{1}}\rightarrow0$.
To obtain (\ref{eq:vw11}), it only remain to show that
\[
\Vert\tau_{-x_{n}}V(e^{-it\Delta}\psi)'\Vert_{L^{\infty}(0,\bar{t}+1)L^{1}}\rightarrow0.
\]
To this purpose, let $\tilde{\psi}$ be a $C^{\infty}$, compactly
supported function such that (recall that we are in dimension one)
\[
\Vert\psi-\tilde{\psi}\Vert_{H^{1}}\leq\epsilon.
\]
We have, by the Cauchy-Schwarz inequality
\begin{eqnarray*}
\Vert\tau_{-x_{n}}V(e^{-it\Delta}\psi)'\Vert_{L^{1}} & \leq & \Vert\tau_{-x_{n}}V(e^{-it\Delta}\tilde{\psi})'\Vert_{L^{1}}+\Vert\tau_{-x_{n}}V(e^{-it\Delta}(\psi-\tilde{\psi}))'\Vert_{L^{1}}\\
 & \leq & \Vert\tau_{-x_{n}}V(e^{-it\Delta}\tilde{\psi})'\Vert_{L^{1}}+\Vert V\Vert_{L^{2}}\Vert(e^{-it\Delta}(\psi-\tilde{\psi}))'\Vert_{L^{2}}\\
 & \leq & \Vert\tau_{-x_{n}}V(e^{-it\Delta}\tilde{\psi})'\Vert_{L^{1}}+\epsilon\Vert V\Vert_{L^{2}}
\end{eqnarray*}
where $V\in L^{2}$ because of the Sobolev embedding $W^{1,1}(\mathbb{R})\hookrightarrow L^{2}(\mathbb{R})$.
Then, as $(e^{-it\Delta}\tilde{\psi})'\in H^{1}$, $\Vert\tau_{-x_{n}}V(e^{-it\Delta}\tilde{\psi})'\Vert_{L^{\infty}(0,\bar{t}+1)L^{1}}$
can be estimated as $\Vert(\tau_{-x_{n}}V)e^{-it\Delta}\psi\Vert_{L^{\infty}(0,\bar{t}+1)L^{1}}$,
so (\ref{eq:vw11}) holds and the proof of (\ref{eq:ab4}) is completed.

\textbf{Assumption (\ref{eq:ab5}).} We decompose
\[
e^{it_{n}A}\tau_{x_{n}}\psi-e^{i\bar{t}A}\tau_{\bar{x}}\psi=(e^{it_{n}A}\tau_{x_{n}}\psi-e^{it_{n}A}\tau_{\bar{x}}\psi)+(e^{it_{n}A}\tau_{\bar{x}}\psi-e^{i\bar{t}A}\tau_{\bar{x}}\psi).
\]
On the one hand, using the estimate (\ref{eq:subiso}) 
\[
\Vert e^{it_{n}A}\tau_{x_{n}}\psi-e^{it_{n}A}\tau_{\bar{x}}\psi\Vert_{H^{1}}\lesssim\Vert\tau_{x_{n}}\psi-\tau_{\bar{x}}\psi\Vert_{H^{1}}\underset{n\rightarrow\infty}{\longrightarrow0}
\]
by the Lebesgue's dominated convergence theorem. On the other hand,
\[
\Vert e^{it_{n}A}\tau_{\bar{x}}\psi-e^{i\bar{t}A}\tau_{\bar{x}}\psi\Vert_{H^{1}}\underset{n\rightarrow\infty}{\longrightarrow0}
\]
because $e^{i\cdot A}\tau_{\bar{x}}\psi\in C(H^{1})$, and the last
assumption is verified.
\end{proof}

\section{Non linear profiles }

In this section, we will see that for a data which escapes to infinity,
the solutions of (\ref{eq:nlsv}) and (\ref{eq:nls0}) are the same,
in the sense given by the three following Propositions.

Propositions \propref{decay_lin}, \propref{decay_duhamel} and \propref{tx_inf}
are the analogous of Propositions 3.4 and 3.6 of \cite{MR213495011}.
The non linear Schrödinger equation with a dirac potential is more
singular, but it allows the use of explicit formulas that are not
available in the present more general framework.
\begin{prop}
\label{prop:decay_lin}Let $\psi\in H^{1}$, $(x_{n})_{n\geq1}\in\mathbb{R}^{\mathbb{N}}$
be such that $|x_{n}|\rightarrow\infty$. Then, up to a subsequence
\begin{equation}
\Vert e^{-it\Delta}\tau_{x_{n}}\psi-e^{-it(\Delta-V)}\tau_{x_{n}}\psi\Vert_{L^{p}L^{r}}\rightarrow0\label{eq:dec1}
\end{equation}
as $n\rightarrow\infty$.
\end{prop}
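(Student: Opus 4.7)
The plan is to reduce to smooth, compactly supported data by density, split the time integral into a compact middle and a tail, control the tail using dispersive decay uniformly in $n$, and control the middle via a Duhamel representation combined with the inhomogeneous Strichartz estimate \eqref{eq:st4-1}. Given $\epsilon>0$, I first pick $\tilde\psi\in C_{c}^{\infty}(\mathbb{R})$ with $\|\psi-\tilde\psi\|_{H^{1}}<\epsilon$; since $\tau_{x_{n}}$ is an $H^{1}$-isometry and \eqref{eq:st2} applies to both $e^{-it\Delta}$ and $e^{-it(\Delta-V)}$, the two flows differ by $O(\epsilon)$ in $L^{p}L^{r}$ on the piece $\psi-\tilde\psi$, so it suffices to prove \eqref{eq:dec1} for $\tilde\psi$.

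For the tail $|t|>T$, the norms $\|\tau_{x_{n}}\tilde\psi\|_{L^{1}\cap L^{2}}=\|\tilde\psi\|_{L^{1}\cap L^{2}}$ are independent of $n$, so interpolating the dispersive estimate \eqref{eq:linflun} (which applies equally to $e^{-it(\Delta-V)}$, cf.\ \eqref{eq:dispest}) with $L^{2}$-conservation yields
\[
\|e^{-it\Delta}\tau_{x_{n}}\tilde\psi\|_{L^{r}}+\|e^{-it(\Delta-V)}\tau_{x_{n}}\tilde\psi\|_{L^{r}}\lesssim|t|^{-\alpha/(2(\alpha+2))}
\]
uniformly in $n$. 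Since $p>2(\alpha+2)/\alpha$ (which holds for $\alpha>4$), this decay is $L^{p}_{t}$-integrable at infinity, so for $T$ large the tail $\|w_{n}\|_{L^{p}(|t|>T)L^{r}}$ is bounded by $\eta$ uniformly in $n$, where $w_{n}(t):=e^{-it(\Delta-V)}\tau_{x_{n}}\tilde\psi-e^{-it\Delta}\tau_{x_{n}}\tilde\psi$.

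For the middle $|t|\le T$, a Duhamel computation (as in the derivation of \eqref{eq:l1}) gives $w_{n}(t)=-i\int_{0}^{t}e^{-i(t-s)(\Delta-V)}V\,e^{-is\Delta}\tau_{x_{n}}\tilde\psi\,ds$. Cutting the source to $|s|\le T$, applying \eqref{eq:st4-1}, and using that $e^{-is\Delta}$ commutes with $\tau_{x_{n}}$ produce
\[
\|w_{n}\|_{L^{p}(|t|\le T)L^{r}}\lesssim\|V\,\tau_{x_{n}}(e^{-is\Delta}\tilde\psi)\|_{L^{\gamma'}(|s|\le T)L^{1}}.
\]
For each fixed $s$, $e^{-is\Delta}\tilde\psi\in H^{1}\hookrightarrow C_{0}(\mathbb{R})$, so $(e^{-is\Delta}\tilde\psi)(\cdot-x_{n})\to 0$ pointwise, and the pointwise bound $|V(x)(e^{-is\Delta}\tilde\psi)(x-x_{n})|\le\|\tilde\psi\|_{H^{1}}|V(x)|\in L^{1}$ combined with dominated convergence gives $\|V\,\tau_{x_{n}}(e^{-is\Delta}\tilde\psi)\|_{L^{1}}\to 0$; a second application of dominated convergence on the compact interval $|s|\le T$, with dominating constant $\|V\|_{L^{1}}\|\tilde\psi\|_{H^{1}}$, promotes this to convergence in $L^{\gamma'}(|s|\le T)$. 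Hence the middle contribution also vanishes as $n\to\infty$ for fixed $T$, and a diagonal extraction combining the reduction $\epsilon$, the vanishing middle, and the uniformly small tail delivers the claimed subsequence along which $\|w_{n}\|_{L^{p}L^{r}}\to 0$.

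\emph{Main obstacle.} The delicate point is that the split into $|t|\le T$ and $|t|>T$ is forced on us: globally on $\mathbb{R}$, the source $V\,\tau_{x_{n}}(e^{-is\Delta}\tilde\psi)$ need not lie in $L^{\gamma'}_{s}L^{1}$, since $\gamma'<2$ prevents the $|s|^{-1/2}$ dispersive decay from being $L^{\gamma'}$-integrable at infinity, so \eqref{eq:st4-1} cannot be applied on all of $\mathbb{R}$. The compact-middle/dispersive-tail decomposition (enabled by $p>2(\alpha+2)/\alpha$, which is precisely why the $L^{p}L^{r}$ Strichartz norm is the right object) bypasses this, and dominated convergence closes the middle because the $L^{\gamma'}$-norm over a finite time interval is automatically finite.
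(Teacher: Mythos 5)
Your proof is correct and follows essentially the same route as the paper: reduce to compactly supported data by density, control $|t|>T$ uniformly in $n$ via the dispersive decay (using $\frac{p}{2}(1-\frac2r)=\frac{\alpha^2}{\alpha+4}>1$), and control $|t|\le T$ by writing the difference as a Duhamel term with source $V e^{-is\Delta}\tau_{x_{n}}\tilde\psi$ and applying the non-admissible estimate (\ref{eq:st4-1}). The only cosmetic difference is that you close the compact-time step by dominated convergence in $x$ and then in $s$, whereas the paper bounds the source in $L^{\infty}(0,T)L^{1}$ and reuses the uniform vanishing-at-infinity argument from the proof of assumption (\ref{eq:ab4}); both are valid.
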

\begin{proof}
Up to a subsequence, we can assume that $x_{n}\rightarrow+\infty$
or $x_{n}\rightarrow-\infty$. Let us assume for example $x_{n}\rightarrow+\infty$.

As a first step, we will show that

\begin{equation}
\sup_{n\in\mathbb{N}}\Vert e^{it(-\Delta+V)}\tau_{x_{n}}\psi\Vert_{L^{p}(T,\infty)L^{r}}\rightarrow0\label{eq:loctemp}
\end{equation}
as $T\rightarrow\infty$. Pick $\epsilon>0$. There exists a $C^{\infty}$,
compactly supported function $\tilde{\psi}$ such that 
\[
\Vert\tilde{\psi}-\psi\Vert_{H^{1}}\leq\epsilon.
\]
By Strichartz estimates
\[
\Vert e^{it(-\Delta+V)}(\tau_{x_{n}}\tilde{\text{\ensuremath{\psi}}}-\tau_{x_{n}}\psi)\Vert_{L^{p}L^{r}}\lesssim\Vert\tau_{x_{n}}\tilde{\text{\ensuremath{\psi}}}-\tau_{x_{n}}\psi\Vert_{H^{1}}=\Vert\tilde{\psi}-\psi\Vert_{H^{1}}\leq\epsilon.
\]
On the other hand, as $\tau_{x_{n}}\tilde{\text{\ensuremath{\psi}}}\in L^{r'}$
the dispersive estimate (\ref{eq:dispest}) gives us
\[
\Vert e^{it(-\Delta+V)}\tau_{x_{n}}\tilde{\text{\ensuremath{\psi}}}\Vert_{L^{r}}\lesssim\frac{1}{|t|^{\frac{1}{2}(\frac{1}{r'}-\frac{1}{r})}}\Vert\tau_{x_{n}}\tilde{\text{\ensuremath{\psi}}}\Vert_{L^{r'}}=\frac{1}{|t|^{\frac{1}{2}(1-\frac{2}{r})}}\Vert\tilde{\psi}\Vert_{L^{r'}}
\]
but $\frac{p}{2}(1-\frac{2}{r})=\frac{\alpha^{2}}{\alpha+4}>1$ and
$t\rightarrow\frac{1}{|t|^{\frac{1}{2}(1-\frac{2}{r})}}\in L^{p}(1,\infty)$.
So, there exists $T>0$ such that 
\[
\sup_{n\in\mathbb{N}}\Vert e^{it(-\Delta+V)}\tau_{x_{n}}\tilde{\text{\ensuremath{\psi}}}\Vert_{L^{p}(|t|\geq T)L^{r}}\leq\epsilon.
\]
Taking $\tau_{x_{n}}\psi=\tau_{x_{n}}\tilde{\text{\ensuremath{\psi}}}+(\tau_{x_{n}}\psi-\tau_{x_{n}}\tilde{\text{\ensuremath{\psi}}})$,
we then obtain for $T>0$ large enough
\[
\sup_{n\in\mathbb{N}}\Vert e^{it(-\Delta+V)}\tau_{x_{n}}\psi\Vert_{L^{p}(|t|\geq T)L^{r}}\lesssim\epsilon
\]
and (\ref{eq:loctemp}) holds.

To obtain (\ref{eq:dec1}), we are now reduced to show that for $T>0$
fixed

\[
\Vert e^{-it\Delta}\tau_{x_{n}}\psi-e^{it(-\Delta+V)}\tau_{x_{n}}\psi\Vert_{L^{p}(0,T)L^{r}}\rightarrow0
\]
as $n\rightarrow\infty$. Let $\epsilon>0$. $e^{-it\Delta}\tau_{x_{n}}\psi-e^{it(-\Delta+V)}\tau_{x_{n}}\psi$
is a solution of the following linear Schrödinger equation with zero
initial data
\[
i\partial_{t}u-\Delta u+Vu=Ve^{-it\Delta}\tau_{x_{n}}\psi.
\]
So, by the inhomogenous Strichartz estimate (\ref{eq:st4-1}) 
\begin{eqnarray*}
\Vert e^{-it\Delta}\tau_{x_{n}}\psi-e^{it(-\Delta+V)}\tau_{x_{n}}\psi\Vert_{L_{t}^{p}(0,T)L^{r}} & \lesssim & \Vert Ve^{-it\Delta}\tau_{x_{n}}\psi\Vert_{L_{t}^{\gamma'}(0,T)L^{1}}\\
 & \lesssim & T^{\frac{1}{\gamma'}}\Vert Ve^{-it\Delta}\tau_{x_{n}}\psi\Vert_{L^{\infty}(0,T)L^{1}}\\
 &  & =T^{\frac{1}{\gamma'}}\Vert(\tau_{-x_{n}}V)e^{-it\Delta}\psi\Vert_{L^{\infty}(0,T)L^{1}}
\end{eqnarray*}
because the translation operator $\tau_{x_{n}}$ commutes with the
propagator $e^{-it\Delta}$. But 
\[
\Vert(\tau_{-x_{n}}V)e^{-it\Delta}\psi\Vert_{L^{\infty}(0,T)L^{1}}\underset{n\rightarrow\infty}{\longrightarrow}0
\]
as seen in the proof of Proposition \ref{prop:prof_assumpt}, point
(\ref{eq:ab4}).
\end{proof}
\begin{prop}
\label{prop:decay_duhamel}Let $\psi\in H^{1}$, $(x_{n})_{n\geq1}\in\mathbb{R}^{\mathbb{N}}$
be such that $|x_{n}|\rightarrow\infty$, $U\in C(H^{1})\cap L^{p}L^{r}$
be the unique solution to (\ref{eq:nls0}) with initial data $\psi$,
and $U_{n}(t,x):=U(t,x-x_{n})$. Then, up to a subsequence

\begin{equation}
\Vert\int_{0}^{t}e^{-i(t-s)\Delta}\left(U_{n}|U_{n}|^{\alpha}\right)(s)ds-\int_{0}^{t}e^{-i(t-s)(\Delta-V)}\left(U_{n}|U_{n}|^{\alpha}\right)(s)ds\Vert_{L^{p}L^{r}}\rightarrow0\label{eq:dec2}
\end{equation}
as $n\rightarrow\infty$. 
\end{prop}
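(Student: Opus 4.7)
My plan is to exhibit $w_n$ as the Duhamel representation of the solution to an inhomogeneous perturbed linear Schr\"odinger equation, apply the non-admissible Strichartz estimate $(\ref{eq:st4-1})$, and conclude by a time-splitting argument in the spirit of Proposition $\propref{decay_lin}$. Setting $A_n(t) := \int_0^t e^{-i(t-s)\Delta}(U_n|U_n|^\alpha)(s)\,ds$ and $B_n$ the analogous perturbed Duhamel, a direct computation gives $w_n(0)=0$ and
\[
i\partial_t w_n + \Delta w_n - V w_n = -V A_n,
\]
so by Duhamel and $(\ref{eq:st4-1})$,
\[
\|w_n\|_{L^pL^r}\;\lesssim\;\|VA_n\|_{L^{\gamma'}L^1}.
\]
Plugging in the Duhamel formula for the free NLS solution $U$ and using that $\tau_{x_n}$ commutes with the free propagator, one rewrites
\[
A_n(t) \;=\; i\,\tau_{x_n}\!\bigl(e^{-it\Delta}\psi - U(t)\bigr),
\]
a translate by $x_n$ of an $H^1$-valued function of $t$ independent of $n$.

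Splitting time at a large $T>0$: on $[0,T]$, Strichartz on a finite interval yields
\[
\|w_n\|_{L^p(0,T)L^r}\;\lesssim\; T^{1/\gamma'}\,\|VA_n\|_{L^\infty(0,T)L^1},
\]
and the right-hand side tends to zero as $n\to\infty$ by the compactness-against-$V$ argument used to verify assumption $(\ref{eq:ab4})$ in Proposition $\propref{prof_assumpt}$: the set $\{e^{-it\Delta}\psi-U(t)\}_{t\in[0,T]}$ is compact in $H^1$ by continuity, hence equidecaying at infinity via $H^1\hookrightarrow C_0$ in dimension one, which combined with $V\in L^1$ and $|x_n|\to\infty$ delivers $\|VA_n\|_{L^1}\to 0$ uniformly in $t\in[0,T]$.

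On $[T,\infty)$ I decompose, via the semigroup property,
\begin{multline*}
w_n(t) \;=\; \int_T^t\bigl(e^{-i(t-s)\Delta}-e^{-i(t-s)(\Delta-V)}\bigr)(U_n|U_n|^\alpha)(s)\,ds \\
+\bigl(e^{-i(t-T)\Delta}-e^{-i(t-T)(\Delta-V)}\bigr)A_n(T) \;+\; e^{-i(t-T)(\Delta-V)}w_n(T).
\end{multline*}
Applying the Strichartz estimates $(\ref{eq:st4})$ to the free and perturbed Duhamels separately, the first piece is bounded in $L^p(T,\infty)L^r$ by $\|U_n|U_n|^\alpha\|_{L^{q'}(T,\infty)L^{r'}}\lesssim\|U\|_{L^p(T,\infty)L^r}^{\alpha+1}$, uniformly small in $n$ as $T\to\infty$ since $U\in L^pL^r$. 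The second piece, for fixed $T$, tends to zero in $L^pL^r$ as $n\to\infty$ thanks to Proposition $\propref{decay_lin}$ applied to the fixed $H^1$-function $\chi_T:=i(e^{-iT\Delta}\psi-U(T))$, since $A_n(T)=\tau_{x_n}\chi_T$. The third piece is controlled in $L^pL^r$ by $\|w_n(T)\|_{H^1}$ via $(\ref{eq:st2})$, and the main obstacle of the proof is to show this vanishes as $n\to\infty$: one applies admissible Strichartz (with derivatives) to the $w_n$-equation on $[0,T]$ and extends the compactness-against-$V$ argument to $\nabla(VA_n)=(\nabla V)A_n+V\nabla A_n$, which is precisely where the hypothesis $V'\in L^1$ enters. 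Letting first $n\to\infty$ (for fixed $T$) to kill the second, third and $[0,T]$ contributions, then $T\to\infty$ to kill the first, completes the proof.
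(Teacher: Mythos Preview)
Your proof is correct, but the route on the large-time region $[T,\infty)$ differs genuinely from the paper's. On $[0,T]$ you do exactly what the paper does: identify $w_n$ as the solution of a perturbed linear equation with source $VA_n$ and apply \eqref{eq:st4-1} together with the compactness-against-$V$ argument from the verification of \eqref{eq:ab4}. Your observation that $A_n(t)=i\tau_{x_n}(e^{-it\Delta}\psi-U(t))$ is the clean way to see that $A_n$ is a translate of a fixed $C([0,T],H^1)$ function, which the paper uses only implicitly at the very end.

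On $[T,\infty)$ the paper does \emph{not} decompose $w_n$. Instead it shows that each of the two Duhamel terms separately is small in $L^p([T,\infty))L^r$, uniformly in $n$: the $\int_T^t$ part via \eqref{eq:st4} and H\"older, and the $\int_0^T$ part via the dispersive estimate \eqref{eq:dispest} combined with Hardy--Littlewood--Sobolev. This stays entirely at the level of $L^p$-Strichartz estimates and never touches $H^1$. Your semigroup decomposition into three pieces is correct and elegant---the reduction of the middle piece to Proposition~\propref{decay_lin} via $A_n(T)=\tau_{x_n}\chi_T$ is a nice recycling move---but the price is the third piece, which forces you to control $\|w_n(T)\|_{H^1}$. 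That in turn requires extending the finite-time estimate to $\nabla(VA_n)$, reproducing the $W^{1,1}$ argument of \eqref{eq:ab4} (and hence genuinely using $V'\in L^1$). The paper's approach buys simplicity by avoiding this $H^1$ detour; yours buys a cleaner conceptual reduction to the linear Proposition~\propref{decay_lin}, at the cost of a heavier finite-time estimate.
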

\begin{proof}
We follow the same spirit of proof as for Proposition \ref{prop:decay_lin}.
We begin to show that
\begin{equation}
\sup_{n\in\mathbb{N}}\Vert\int_{0}^{t}e^{-i(t-s)(\Delta-V)}\left(U_{n}|U_{n}|^{\alpha}\right)(s)ds\Vert_{L^{p}([T,\infty))L^{r}}\rightarrow0\label{eq:dec2-2}
\end{equation}
as $T$ goes to infinity.

We decompose
\begin{multline*}
\Vert\int_{0}^{t}e^{-i(t-s)(\Delta-V)}\left(U_{n}|U_{n}|^{\alpha}\right)(s)ds\Vert_{L^{p}([T,\infty))L^{r}}\leq\Vert\int_{0}^{T}e^{-i(t-s)(\Delta-V)}\left(U_{n}|U_{n}|^{\alpha}\right)(s)ds\Vert_{L^{p}([T,\infty))L^{r}}\\
+\Vert\int_{T}^{t}e^{-i(t-s)(\Delta-V)}\left(U_{n}|U_{n}|^{\alpha}\right)(s)ds\Vert_{L^{p}([T,\infty))L^{r}}
\end{multline*}
where, by the inhomogenous Strichartz estimates
\[
\Vert\int_{T}^{t}e^{-i(t-s)(\Delta-V)}\left(U_{n}|U_{n}|^{\alpha}\right)(s)ds\Vert_{L^{p}([T,\infty)L^{r})}\leq\Vert U_{n}|U_{n}|^{\alpha}\Vert_{L^{q'}([T,\infty)L^{r'})}=\Vert U|U|^{\alpha}\Vert_{L^{q'}([T,\infty)L^{r'})}
\]
and, by the Hölder inequality 
\[
\Vert U|U|^{\alpha}\Vert_{L^{q'}([T,\infty)L^{r'})}\leq\Vert U\Vert_{L^{p}([T,\infty)L^{r})}^{\alpha+1}\underset{T\rightarrow\infty}{\longrightarrow}0
\]
independently of $n$. On the other hand, by the dispersive estimate
(\ref{eq:dispest})
\begin{multline*}
\Vert\int_{0}^{T}e^{-i(t-s)(\Delta-V)}\left(U_{n}|U_{n}|^{\alpha}\right)(s)ds\Vert_{L^{p}[T,\infty)L^{r}}\\
\leq\Vert\int_{0}^{T}\Vert e^{-i(t-s)(\Delta-V)}\left(U_{n}|U_{n}|^{\alpha}\right)(s)\Vert_{L^{r}}ds\Vert_{L^{p}([T,\infty))}\\
\lesssim\Vert\int_{0}^{T}(t-s)^{-\frac{1}{2}(1-\frac{2}{r})}\Vert\left(U_{n}|U_{n}|^{\alpha}\right)(s)\Vert_{L^{r'}}ds\Vert_{L^{p}([T,\infty))}\\
=\Vert\int_{0}^{T}(t-s)^{-\frac{1}{2}(1-\frac{2}{r})}\Vert\left(U|U|^{\alpha}\right)(s)\Vert_{L^{r'}}ds\Vert_{L^{p}([T,\infty))}\\
\leq\Vert\int_{\mathbb{R}}|t-s|^{-\frac{1}{2}(1-\frac{2}{r})}\Vert\left(U|U|^{\alpha}\right)(s)\Vert_{L^{r'}}ds\Vert_{L^{p}([T,\infty))}\longrightarrow0
\end{multline*}
as $T$ goes to infinity. Indeed, note that by the Hardy-Littlewood-Sobolev
inequality
\[
\Vert\int_{\mathbb{R}}|t-s|^{-\frac{1}{2}(1-\frac{2}{r})}\Vert\left(U|U|^{\alpha}\right)(s)\Vert_{L^{r'}}ds\Vert_{L^{p}}\lesssim\Vert U|U|^{\alpha}\Vert_{L^{q'}L^{r'}}\leq\Vert U\Vert_{L^{p}L^{r}}^{\alpha+1}<\infty
\]
so (\ref{eq:dec2-2}) holds. The same estimate is obviously valid
for the propagator $e^{-it\Delta}$.

It remains to show that for $T>0$ fixed, 
\[
\Vert\int_{0}^{t}e^{-i(t-s)\Delta}\left(U_{n}|U_{n}|^{\alpha}\right)ds-\int_{0}^{t}e^{-i(t-s)(\Delta-V)}\left(U_{n}|U_{n}|^{\alpha}\right)ds\Vert_{L^{p}(0,T)L^{r}}\rightarrow0
\]
as $n\rightarrow\infty$. The difference
\[
\int_{0}^{t}e^{-i(t-s)\Delta}\left(U_{n}|U_{n}|^{\alpha}\right)ds-\int_{0}^{t}e^{-i(t-s)(\Delta-V)}\left(U_{n}|U_{n}|^{\alpha}\right)ds
\]
 is the solution of the following linear Schrödinger equation, with
zero initial data
\[
i\partial_{t}u-\Delta u+Vu=V\int_{0}^{t}e^{-i(t-s)\Delta}\left(U_{n}|U_{n}|^{\alpha}\right)ds.
\]
As a consequence, by the Strichartz estimate (\ref{eq:st4-1})
\begin{multline*}
\Vert\int_{0}^{t}e^{-i(t-s)\Delta}\left(U_{n}|U_{n}|^{\alpha}\right)ds-\int_{0}^{t}e^{-i(t-s)(\Delta-V)}\left(U_{n}|U_{n}|^{\alpha}\right)ds\Vert_{L^{p}(0,T)L^{r}}\\
\lesssim\Vert V\int_{0}^{t}e^{-i(t-s)\Delta}\left(U_{n}|U_{n}|^{\alpha}\right)ds\Vert_{L^{\gamma'}(0,T)L^{1}}\\
\lesssim T^{\frac{1}{\gamma'}}\Vert(\tau_{-x_{n}}V)\int_{0}^{t}e^{-i(t-s)\Delta}\left(U|U|^{\alpha}\right)ds\Vert_{L^{\infty}(0,T)L^{1}}.
\end{multline*}
But $\int_{0}^{t}e^{-i(t-s)\Delta}(U|U|^{\alpha})ds\in C([0,T],H^{1})$
and the functions of $H^{1}(\mathbb{R})$ vanish at infinity, so there
exists $\varLambda>0$ such that 
\[
\Vert\int_{0}^{t}e^{-i(t-s)\Delta}\left(U|U|^{\alpha}\right)ds\Vert_{L^{\infty}(0,T)L^{\infty}(|x|\geq\varLambda)}\leq\epsilon
\]
so
\[
\Vert(\tau_{-x_{n}}V)\int_{0}^{t}e^{-i(t-s)\Delta}\left(U|U|^{\alpha}\right)ds\Vert_{L^{\infty}(0,T)L^{1}}\underset{n\rightarrow\infty}{\longrightarrow}0
\]
in the same way as in the proof of Proposition \ref{prop:prof_assumpt},
point (\ref{eq:ab4}) .
\end{proof}
\begin{prop}
\label{prop:tx_inf}Let $\psi\in H^{1}$, $(x_{n})_{n\geq1},\,(t_{n})_{n\geq1}\in\mathbb{R}^{\mathbb{N}}$
be such that $|x_{n}|\rightarrow\infty$ and $t_{n}\rightarrow\pm\infty$,
$U$ be a solution to (\ref{eq:nls0}) such that
\[
\Vert U(t)-e^{-it\Delta}\psi\Vert_{H^{1}}\underset{t\rightarrow\pm\infty}{\longrightarrow}0
\]
and $U_{n}(t,x):=U(t-t_{n},x-x_{n})$. Then, up to a subsequence
\begin{equation}
\Vert e^{-i(t-t_{n})\Delta}\tau_{x_{n}}\psi-e^{-i(t-t_{n})(\Delta-V)}\tau_{x_{n}}\psi\Vert_{L^{p}L^{r}}\rightarrow0\label{eq:dec1-1}
\end{equation}
and

\begin{equation}
\Vert\int_{0}^{t}e^{-i(t-s)\Delta}\left(U_{n}|U_{n}|^{\alpha}\right)ds-\int_{0}^{t}e^{-i(t-s)(\Delta-V)}\left(U_{n}|U_{n}|^{\alpha}\right)ds\Vert_{L^{p}L^{r}}\rightarrow0\label{eq:dec2-1}
\end{equation}
as $n\rightarrow\infty$. 
\end{prop}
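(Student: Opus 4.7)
The first estimate (\ref{eq:dec1-1}) follows directly from Proposition \propref{decay_lin} by translation invariance in time: the substitution $\tau = t - t_n$ is an $L^p$-isometry in the time variable and leaves the propagators and the data $\tau_{x_n}\psi$ unchanged, so
\[
\|e^{-i(t-t_n)\Delta}\tau_{x_n}\psi - e^{-i(t-t_n)(\Delta - V)}\tau_{x_n}\psi\|_{L^p L^r} = \|e^{-i\tau\Delta}\tau_{x_n}\psi - e^{-i\tau(\Delta - V)}\tau_{x_n}\psi\|_{L^p_\tau L^r},
\]
and the right-hand side tends to zero (along a further subsequence) by Proposition \propref{decay_lin}.

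For (\ref{eq:dec2-1}), the plan is to apply the same time substitution $\tau = t-t_n$, $\sigma = s-t_n$ and then reduce to Proposition \propref{decay_duhamel} by cutting at a large but fixed negative time. Since $(U_n|U_n|^\alpha)(s, x) = \tau_{x_n}(U|U|^\alpha)(\sigma, x)$, each Duhamel integral becomes
\[
\int_{-t_n}^\tau e^{-i(\tau - \sigma)L}\tau_{x_n}(U|U|^\alpha)(\sigma)\, d\sigma, \quad L \in \{\Delta,\, \Delta - V\},
\]
with the $L^p_t L^r_x$ norm equal to the $L^p_\tau L^r_x$ norm. The new feature, compared to Proposition \propref{decay_duhamel}, is the lower limit $-t_n$ instead of $0$.

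To handle this, assume without loss of generality (by a further subsequence) that $t_n \to +\infty$, fix $\epsilon > 0$, and choose $M > 0$ such that $\|U\|_{L^p(-\infty, -M)L^r}^{\alpha+1} < \epsilon$, which is possible since $U \in L^p L^r$. For $n$ with $t_n > M$, split $\int_{-t_n}^\tau = \int_{-t_n}^{-M} + \int_{-M}^\tau$. For both $L = \Delta$ and $L = \Delta - V$, the first piece is bounded in $L^p L^r$ by
\[
\|U|U|^\alpha\|_{L^{q'}(-t_n, -M)L^{r'}} \lesssim \|U\|_{L^p(-\infty, -M)L^r}^{\alpha+1} < \epsilon
\]
via the dispersive estimate (\ref{eq:dispest}) and the Hardy--Littlewood--Sobolev inequality, exactly as in the proof of Proposition \propref{decay_duhamel}, uniformly in $n$. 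For the second piece, the translation $\sigma' = \sigma + M$, $\tau' = \tau + M$, which is an $L^p$-isometry, recasts it as
\[
\int_0^{\tau'} e^{-i(\tau' - \sigma')L}\tau_{x_n}\bigl(\tilde U_M |\tilde U_M|^\alpha\bigr)(\sigma')\, d\sigma',
\]
where $\tilde U_M(t, x) := U(t - M, x)$ is again a solution of (\ref{eq:nls0}) in $C(H^1) \cap L^p L^r$ with data $U(-M) \in H^1$. Proposition \propref{decay_duhamel} applied to $\tilde U_M$ then sends the $L = \Delta$ versus $L = \Delta - V$ difference of this piece to $0$ in $L^p L^r$ as $n \to \infty$. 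Combining the two contributions and letting first $n \to \infty$, then $\epsilon \to 0$, yields (\ref{eq:dec2-1}); the case $t_n \to -\infty$ is symmetric.

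The delicate point is the order of limits: $M$ must be fixed first using the $n$-independent Strichartz-type tail bound on $U \in L^p L^r$, and only then can $n$ be sent to infinity so that Proposition \propref{decay_duhamel} applies to the time-translated solution $\tilde U_M$. Beyond this, the argument is a straightforward time-shift reduction to the two preceding propositions.
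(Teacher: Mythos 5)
Your argument is correct, and it relies on the same two mechanisms as the paper's (very terse) proof: uniform-in-$n$ smallness of everything living at times far from $t_{n}$, obtained from the dispersive estimate (\ref{eq:dispest}), Hardy--Littlewood--Sobolev and the $L^{p}L^{r}$ tail of $U$, together with the already-established Propositions \propref{decay_lin} and \propref{decay_duhamel} for what remains. The packaging differs slightly: the paper reruns the proofs of those two propositions, decomposing the time interval into $\{|t-t_{n}|>T\}$ and its complement, whereas you translate time by $t_{n}$ and invoke them as black boxes --- for (\ref{eq:dec1-1}) this is an exact identity with (\ref{eq:dec1}), since the $L^{p}_{t}L^{r}$ norm is over all of $\mathbb{R}$ and the datum $\tau_{x_{n}}\psi$ is time-independent, and for (\ref{eq:dec2-1}) the only genuinely new term is the Duhamel contribution of the source on $\{\sigma\leq-M\}$ (i.e. $s\leq t_{n}-M$), which you correctly bound by $\Vert U\Vert_{L^{p}((-\infty,-M))L^{r}}^{\alpha+1}$ uniformly in $n$ via dispersive decay and Hardy--Littlewood--Sobolev, the remaining piece being \propref{decay_duhamel} applied to the time-translate $\tilde{U}_{M}$, which is indeed a solution of (\ref{eq:nls0}) in $C(H^{1})\cap L^{p}L^{r}$. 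Your reduction buys a cleaner write-up (nothing is reproved), at the cost of one bookkeeping point left implicit: Propositions \propref{decay_lin} and \propref{decay_duhamel} are stated ``up to a subsequence'', but the extraction there only fixes the sign of $x_{n}\rightarrow\pm\infty$, so after one extraction at the outset the convergence holds for every fixed $M$ along the same sequence, which is what justifies taking first $n\rightarrow\infty$ and then $\epsilon\rightarrow0$ (otherwise a diagonal argument over $\epsilon=1/k$ would be needed, harmless here since the conclusion is itself stated up to a subsequence).
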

\begin{proof}
The proof is the same as for Proposition \ref{prop:decay_lin} and
Proposition \ref{prop:decay_duhamel}, decomposing the time interval
in $\left\{ |t-t_{n}|>T\right\} $ and its complementary.
\end{proof}
Finaly, we will need the following Proposition of non linear scattering:
\begin{prop}
\label{prop:lin_scatt}Let $\varphi\in H^{1}$. Then there exists
$W_{\pm}\in C(H^{1})\cap L_{\mathbb{R}^{\pm}}^{p}L^{r}$, solution
of (\ref{eq:nlsv}) such that
\begin{equation}
\Vert W_{\pm}(t,\cdot)-e^{-it(\Delta-V)}\varphi\Vert_{H^{1}}\underset{t\rightarrow\pm\infty}{\longrightarrow}0\label{eq:nl1}
\end{equation}
moreover, if $t_{n}\rightarrow\mp\infty$ and 
\begin{equation}
\varphi_{n}=e^{-it_{n}(\Delta-V)}\varphi,\quad W_{\pm,n}(t)=W_{\pm}(t-t_{n})\label{eq:nl2}
\end{equation}
then
\begin{equation}
W_{\pm,n}(t)=e^{-it(\Delta-V)}\varphi_{n}+\int_{0}^{t}e^{-i(t-s)(\Delta-V)}(W_{\pm,n}|W_{\pm,n}|^{\alpha})(s)ds+f_{\pm,n}(t)\label{eq:nl3}
\end{equation}
where
\begin{equation}
\Vert f_{\pm,n}\Vert_{L_{\mathbb{R}^{\pm}}^{p}L^{r}}\underset{n\rightarrow\infty}{\longrightarrow0}.\label{eq:nl4}
\end{equation}
\end{prop}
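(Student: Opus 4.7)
The plan is to first construct $W_\pm$ by the usual wave-operator contraction at infinity, then to identify $f_{\pm,n}$ algebraically from the Duhamel formula for the time-translated solution $W_{\pm,n}$, and finally to reduce \eqref{eq:nl4} to a single linear flow of an $H^1$-vanishing datum.

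For the existence of $W_+$ together with the scattering statement \eqref{eq:nl1}, I would solve the Duhamel fixed-point equation
\[
W_+(t) = e^{-it(\Delta-V)}\varphi + i\int_t^{+\infty} e^{-i(t-s)(\Delta-V)}\bigl(W_+|W_+|^\alpha\bigr)(s)\,ds
\]
on $[T,+\infty)$ for $T$ large, in a small closed ball of $L^p([T,+\infty),L^r)\cap L^\infty([T,+\infty),H^1)$. The Strichartz estimate \eqref{eq:st2} places $e^{-it(\Delta-V)}\varphi$ in $L^pL^r$, so its $L^p([T,+\infty))L^r$-norm is arbitrarily small for $T$ large; combined with \eqref{eq:st4} and the H\"older bound $\|w|w|^\alpha\|_{L^{q'}L^{r'}}\le\|w\|_{L^pL^r}^{\alpha+1}$ (the same one used in the proof of \propref{decay_duhamel}, exploiting $p/q'=\alpha+1$ and $(\alpha+1)r'=r$), this closes the contraction. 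Running the same estimates on $\nabla$ gives $W_+\in L^\infty([T,+\infty),H^1)$, and bounding the Duhamel tail in $H^1$ yields \eqref{eq:nl1} as $t\to+\infty$. Global well-posedness of \eqref{eq:nlsv} in $H^1$ noted in the introduction then extends $W_+$ to $C(\mathbb{R},H^1)$, and \propref{pert1} puts $W_+$ in $L^p_{\mathbb{R}^+}L^r$. The construction of $W_-$ is symmetric (integrating toward $-\infty$).

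For \eqref{eq:nl3}--\eqref{eq:nl4}, I use that \eqref{eq:nlsv} is time-translation invariant, so $W_{\pm,n}(t)=W_\pm(t-t_n)$ is itself a solution of the nonlinear equation and satisfies the Duhamel identity from $t=0$,
\[
W_{\pm,n}(t) = e^{-it(\Delta-V)}W_{\pm,n}(0) - i\int_0^t e^{-i(t-s)(\Delta-V)}\bigl(W_{\pm,n}|W_{\pm,n}|^\alpha\bigr)(s)\,ds.
\]
Comparing with the ansatz \eqref{eq:nl3} produces the closed-form identity
\[
f_{\pm,n}(t) = e^{-it(\Delta-V)}\bigl(W_{\pm,n}(0)-\varphi_n\bigr),
\]
so everything reduces to an $H^1$-bound on the time-$0$ datum $W_{\pm,n}(0)-\varphi_n$. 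Since $W_{\pm,n}(0)=W_\pm(-t_n)$ with $-t_n\to\pm\infty$, evaluating \eqref{eq:nl1} at time $-t_n$ and invoking the definition \eqref{eq:nl2} of $\varphi_n$ gives $\|W_{\pm,n}(0)-\varphi_n\|_{H^1}\to 0$. The Strichartz bound \eqref{eq:st2} then yields
\[
\|f_{\pm,n}\|_{L^pL^r} \;\lesssim\; \|W_{\pm,n}(0)-\varphi_n\|_{H^1} \;\xrightarrow[n\to\infty]{}\; 0,
\]
which is in fact stronger than \eqref{eq:nl4} (it holds on all of $\mathbb{R}$, \emph{a fortiori} on $\mathbb{R}^\pm$). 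The only technically delicate step is the wave-operator contraction above; once $W_\pm$ has been produced in $L^pL^r$, the algebraic identification of $f_{\pm,n}$ as a linear flow of an $H^1$-vanishing datum makes \eqref{eq:nl4} immediate, and no further ingredient beyond \propref{strichartz} and the scattering property \eqref{eq:nl1} is required.
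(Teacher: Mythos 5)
Your proposal is correct and follows essentially the same route as the paper, whose proof is simply a citation of Proposition 3.5 of Banica--Visciglia: a wave-operator contraction built on the non-admissible Strichartz estimates of Proposition \ref{strichartz} together with the H\"older identity $\Vert W|W|^{\alpha}\Vert_{L^{q'}L^{r'}}\leq\Vert W\Vert_{L^{p}L^{r}}^{\alpha+1}$, followed by the algebraic identification $f_{\pm,n}(t)=e^{-it(\Delta-V)}\bigl(W_{\pm,n}(0)-\varphi_{n}\bigr)$ and the bound $\Vert f_{\pm,n}\Vert_{L^{p}L^{r}}\lesssim\Vert W_{\pm}(-t_{n})-\varphi_{n}\Vert_{H^{1}}\rightarrow0$ from (\ref{eq:nl1}) and (\ref{eq:st2}). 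The only points you gloss over are at the level of detail the paper itself elides: the gradient estimates in the contraction are not literally ``the same estimates on $\nabla$'' since $\nabla$ does not commute with $e^{-it(-\Delta+V)}$ (one runs them through the equivalent norm $\Vert(1+A)^{1/2}\cdot\Vert_{L^{2}}$ coming from (\ref{eq:ab1}), or via the commutator term $V'W$ using $V'\in L^{1}$), and the sign bookkeeping in (\ref{eq:nl2}) must be read so that $\varphi_{n}$ is the linear flow of $\varphi$ at time $-t_{n}$, which is what your step ``evaluate (\ref{eq:nl1}) at $t=-t_{n}$'' implicitly requires.
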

\begin{proof}
The same proof as \cite{MR213495011}, Proposition 3.5, holds, as
it involves only the analogous Strichartz estimates.
\end{proof}

\section{Construction of a critical element}

We have now all the tools to extract a critical element following
the approach of \cite{MR2838120}. Let
\[
E_{c}=\sup\left\{ E>0\ |\ \forall\varphi\in H^{1},\ E(\varphi)<E\Rightarrow\text{the solution of \ensuremath{(1.1)} with data }\varphi\text{ is in }L^{p}L^{r}\right\} .
\]
We will suppose that the critical energy $E_{c}$ is finite, and deduce
the existence of a solution of (\ref{eq:nlsv}) with a relatively
compact flow in $H^{1}$.
\begin{prop}
\label{prop:extr}If $E_{c}<\infty$, then there exists $\varphi_{c}\in H^{1}$,
$\varphi_{c}\neq0$, such that the corresponding solution $u_{c}$
of (\ref{eq:nlsv}) verifies that $\left\{ u_{c}(t),\ t\geq0\right\} $
is relatively compact in $H^{1}$.
\end{prop}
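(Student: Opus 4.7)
The plan is to run the standard Kenig--Merle extraction of a critical element, adapted to our non-translation-invariant setting. First, pick a minimizing sequence $\varphi_n \in H^1$ with $E(\varphi_n) \to E_c$ such that the corresponding solutions $u_n$ of (\ref{eq:nlsv}) satisfy $\|u_n\|_{L^p L^r(\mathbb{R}^+)} = +\infty$. Because $V \geq 0$, the energy controls $\|\cdot\|_{H^1}^2$, so $(\varphi_n)$ is bounded in $H^1$ and the profile decomposition of Section 3 applies (via Proposition \ref{prop:prof_assumpt}), producing profiles $\psi_j$, parameters $(t_j^n, x_j^n)$ and remainders $R_n^J$.

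To each profile I would associate a nonlinear profile $U_j$ solving (\ref{eq:nlsv}) or (\ref{eq:nls0}) depending on whether $x_j^n$ is identically zero or diverges to $\pm\infty$, and using the wave operators of Proposition \propref{lin_scatt} (respectively, Nakanishi's wave operators \cite{MR1726753} for the free equation) when $|t_j^n| \to \infty$. A Pythagorean splitting of the energy then follows from (\ref{eq:dp6}) and (\ref{eq:dp7}) together with the fact that, for profiles with $|x_j^n| \to \infty$, the potential term of the energy vanishes in the limit (exactly the argument used for assumption (\ref{eq:ab2}) in the proof of Proposition \ref{prop:prof_assumpt}); asymptotically,
\[
E(\varphi_n) = \sum_{j=1}^{J} E_j(\psi_j) + E(R_n^J) + o_n(1),
\]
where each $E_j$ is the natural energy of the corresponding limiting equation, and every term is non-negative since $V \geq 0$.

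The rigidity step is that only one profile can carry positive asymptotic energy. Indeed, if two or more profiles had strictly positive energy, each would satisfy $E < E_c$, so by definition of $E_c$ and by Nakanishi's scattering result for the free equation, all the nonlinear profiles would lie in $L^p L^r$. Combining the orthogonality of parameters (\ref{eq:dp3}), the decay (\ref{eq:dp4}) of the linear remainder, and Propositions \propref{decay_lin}, \propref{decay_duhamel} and \propref{tx_inf}, which approximate the potential equation by the free one for profiles with $|x_j^n| \to \infty$, the long-time perturbation of Proposition \ref{pert3} would then force $u_n \in L^p L^r$ for $n$ large, contradicting our choice. Hence a single profile $\psi_1$ carries all the energy and $R_n^J \to 0$. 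Moreover, $|x_1^n|$ cannot escape to infinity: the associated nonlinear profile would be a free solution, which scatters by Nakanishi, and the same perturbation argument would again yield a contradiction. Likewise $|t_1^n| \to \infty$ is excluded by Proposition \propref{lin_scatt}. So $t_1^n = 0$ and $x_1^n = 0$ (by (\ref{eq:dp1}), (\ref{eq:dep2})), yielding $\varphi_n \to \psi_1 =: \varphi_c$ strongly in $H^1$, with $E(\varphi_c) = E_c$ and $u_c \notin L^p L^r(\mathbb{R}^+)$.

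Finally, to prove that $\{u_c(t),\ t \geq 0\}$ is relatively compact in $H^1$, I would apply the same extraction to an arbitrary sequence $(u_c(t_n))_n$. If $t_n$ is bounded, continuity of the flow gives a convergent subsequence. If $t_n \to \infty$, note that $u_c(\cdot + t_n)$ is a solution of (\ref{eq:nlsv}) with data $u_c(t_n)$, that $E(u_c(t_n)) = E_c$ by conservation, and that $u_c(\cdot + t_n) \notin L^p L^r(\mathbb{R}^+)$ (otherwise $u_c \in L^p L^r([t_n, \infty))$, hence in $L^p L^r(\mathbb{R}^+)$ since the flow is continuous with values in $H^1 \hookrightarrow L^r$ on compact time intervals, contradicting the choice of $u_c$); the extraction then yields a strong $H^1$ limit along a subsequence. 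The main technical obstacle is the rigidity step: one must assemble the nonlinear profiles so that their sum approximates $u_n$ in $L^p L^r$ well enough to trigger Proposition \ref{pert3}, which requires carefully handling the interaction between profiles with bounded and diverging $x_j^n$ via Propositions \propref{decay_lin}--\propref{tx_inf}.
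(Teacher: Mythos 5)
Your proposal follows essentially the same route as the paper's proof: a minimizing sequence at the level $E_{c}$, the abstract profile decomposition applied with $A=-\Delta+V$, nonlinear profiles constructed according to the four regimes of $(t_{j}^{n},x_{j}^{n})$ (Cauchy problem for the potential equation, translated free solutions, and the wave operators for the potential and free equations), the comparison propositions for escaping profiles together with the long-time perturbation result to exclude more than one nontrivial profile and nontrivial parameters, and then the same extraction rerun along $u_{c}(t_{n})$ to get relative compactness. This is exactly the scheme of the paper (which assembles these ingredients by following the proof of Proposition 4.1 of Banica--Visciglia), so your argument matches it.
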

\begin{proof}
Because of Proposition \ref{pert2}, $E_{c}>0$. Therefore, if $E_{c}<\infty$,
there exists a sequence $\varphi_{n}$ of non-zero elements of $H^{1}$,
such that, if we denote by $u_{n}\in C(H^{1})$ the corresponding
solution of (\ref{eq:nlsv}), we have
\[
E(\varphi_{n})\underset{n\rightarrow\infty}{\longrightarrow}E_{c}
\]
and
\[
u_{n}\notin L^{q}L^{r}.
\]
Thanks to the Proposition \propref{prof_assumpt}, we can apply the
abstract profile decomposition of \cite{MR213495011} to the $H^{1}$-bounded
sequence $\varphi_{n}$ and the operator $A=-\Delta+V$. Up to a subsequence,
$\varphi_{n}$ writes, for all $J\in\mathbb{N}$: 
\[
\varphi_{n}=\sum_{j=1}^{J}e^{-it_{j}^{n}(-\Delta+V)}\tau_{x_{j}^{n}}\psi_{j}+R_{n}^{J}.
\]
where $t_{j}^{n},x_{j}^{n},\psi_{j},R_{n}^{J}$ verifies (\ref{eq:dp1})$-$(\ref{eq:dp7}).
From (\ref{eq:dp6}) and (\ref{eq:dp7}), we have
\[
E_{c}\geq\underset{n\rightarrow\infty}{\limsup}\sum_{j=1}^{J}E(e^{-it_{j}^{n}(-\Delta+V)}\tau_{x_{j}^{n}}\psi_{j}).
\]
We show that there is exactly one non trivial profile, that is $J=1$.
By contradiction, assume that $J>1$. To each profile $\psi_{j}$
we associate family of non linear profiles $(U_{j,n})_{n\geq0}$.
Let $j\in\{1\cdots J\}$. We are in exactly one of the following situations:

\begin{enumerate}
\item If $(t_{j}^{n},x_{j}^{n})=(0,0)$. By the orthogonality condition,
notice that this can happen only for one profile. Because $J>1$,
we have $E(\psi_{j})<E_{c}$, so the solution of (\ref{eq:nlsv})
with data $\psi_{j}$ scatters. If this case happens, let $N\in C(H^{1})\cap L^{p}L^{r}$
be this solution, otherwise, we set $N=0$.
\item If $t_{j}^{n}=0$ and $|x_{j}^{n}|\rightarrow\infty$. Let $U_{j}\in C(H^{1})\cap L^{p}L^{r}$
be the unique solution to (\ref{eq:nls0}) with initial data $\psi_{j}$.
We set $U_{n,j}(x,t):=U(x-x_{j}^{n},t)$.
\item If $x_{j}^{n}=0$ and $t_{j}^{n}\rightarrow\pm\infty$. By Proposition
\ref{prop:lin_scatt}, there exists $U_{j}\in C_{\mathbb{R}_{\pm}}(H^{1})\cap L_{\mathbb{R}_{\pm}}^{p}L^{r}$
a solution to (\ref{eq:nlsv}) such that
\[
\Vert U_{j}(t)-e^{-it(\Delta-V)}\psi_{j}\Vert_{H^{1}}\underset{t\rightarrow\pm\infty}{\longrightarrow}0
\]
 and verifying (\ref{eq:nl1}), (\ref{eq:nl2}), (\ref{eq:nl3}),
(\ref{eq:nl4}). We have 
\[
E(U_{j})=\lim_{n\rightarrow\infty}E(e^{-it_{j}^{n}(-\Delta+V)}\tau_{x_{j}^{n}}\psi_{j})<E_{c}
\]
so $U_{j}\in L^{q}L^{r}$. We set $U_{j,n}(t,x):=U_{j}(t-t_{j}^{n},x)$.
\item If $|x_{j}^{n}|\rightarrow\infty$ and $t_{j}^{n}\rightarrow\pm\infty$.
Let $U_{j}\in C(H^{1})\cap L^{p}L^{r}$ be a solution to (\ref{eq:nls0})
such that
\[
\Vert U_{j}(t)-e^{-it\Delta}\psi_{j}\Vert_{H^{1}}\underset{t\rightarrow\pm\infty}{\longrightarrow}0
\]
We set $U_{j,n}(t,x):=U_{j}(t-t_{j}^{n},x-x_{j}^{n})$.
\end{enumerate}
Now, let 
\[
Z_{n,J}:=N+\sum_{j}U_{n,j}.
\]
By the results of the non linear profiles section - Propositions \ref{prop:decay_lin}
and \ref{prop:decay_duhamel} in situation (2), Proposition \ref{prop:tx_inf}
in situation (3) and Proposition \ref{prop:lin_scatt} in situation
(4) -, we have
\begin{multline}
Z_{n,J}=e^{-it(\Delta-V)}(\varphi_{n}-R_{n,J})+\int_{0}^{t}e^{-i(t-s)(\Delta-V)}(N|N|^{\alpha})(s)ds\\
+\sum_{j}\int_{0}^{t}e^{-i(t-s)(\Delta-V)}(U_{j,n}|U_{j,n}|^{\alpha})(s)ds+r_{n,J}\label{eq:extrac_dec}
\end{multline}
with
\[
\Vert r_{n,J}\Vert_{L^{p}L^{r}}\rightarrow0
\]
as $n\rightarrow\infty$. The decomposition (\ref{eq:extrac_dec})
is the same as obtained in the proof of Proposition 4.1 of \cite{MR213495011},
and we therefore obtain the critical element following their proof,
using our perturbative result of Proposition \ref{pert3} instead
of their Proposition 3.3, and the Strichartz inequalities of our Proposition
\ref{strichartz} instead of estimates (3.1), (3.2), (3.3), (3.4)
of their paper.
\end{proof}

\section{Rigidity}

In this section, we will show that the critical solution constructed
in the previous one assuming the fact that $E_{c}<\infty$ cannot
exist.

We will need the following classical result concerning the compact
families of $H^{1}$
\begin{prop}
\label{prop:funccomp}Suppose that $\left\{ u(t),\ t\geq0\right\} $
is relatively compact in $H^{1}$. Then, for any $\epsilon>0$, there
exists $R>0$ such that
\[
\sup_{t\geq0}\int_{|x|\geq R}\left(|\nabla u(t,x)|^{2}+|u(t,x)|^{2}+|u(t,x)|^{\alpha+2}\right)dx\leq\epsilon
\]
\end{prop}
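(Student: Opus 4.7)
The plan is to exploit the total boundedness of the orbit together with the tail decay of each individual $H^1$ function, and then transfer that tail decay uniformly to the whole orbit via a Sobolev-continuity argument. Fix $\epsilon>0$. Because $K:=\{u(t),\,t\geq 0\}$ is relatively compact in $H^{1}$, it is totally bounded, so I can cover it by finitely many $H^{1}$-balls of radius $\delta$ (with $\delta$ to be chosen small in terms of $\epsilon$), centered at points $u(t_{1}),\dots,u(t_{N})$.

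Next, for each fixed index $i\in\{1,\dots,N\}$, the function $u(t_{i})$ lies in $H^{1}(\mathbb{R})\hookrightarrow L^{\alpha+2}(\mathbb{R})$, so $|\nabla u(t_{i})|^{2}+|u(t_{i})|^{2}+|u(t_{i})|^{\alpha+2}\in L^{1}(\mathbb{R})$. By dominated convergence, there exists $R_{i}>0$ with
\[
\int_{|x|\geq R_{i}}\!\left(|\nabla u(t_{i},x)|^{2}+|u(t_{i},x)|^{2}+|u(t_{i},x)|^{\alpha+2}\right)dx\leq\tfrac{\epsilon}{2}.
\]
Set $R:=\max_{i}R_{i}$.

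Now for arbitrary $t\geq 0$, choose an index $i$ with $\|u(t)-u(t_{i})\|_{H^{1}}\leq\delta$. On the region $\Omega_R:=\{|x|\geq R\}$, I bound
\[
\int_{\Omega_R}|\nabla u(t)|^{2}+|u(t)|^{2}\leq 2\int_{\Omega_R}|\nabla u(t_{i})|^{2}+|u(t_{i})|^{2}+2\|u(t)-u(t_{i})\|_{H^{1}}^{2},
\]
and for the $L^{\alpha+2}$ part I write $|u(t)|^{\alpha+2}\lesssim |u(t_{i})|^{\alpha+2}+|u(t)-u(t_{i})|^{\alpha+2}$, then integrate on $\Omega_R$, using the Sobolev embedding $H^{1}\hookrightarrow L^{\alpha+2}$ to control the difference by $C\|u(t)-u(t_{i})\|_{H^{1}}^{\alpha+2}\leq C\delta^{\alpha+2}$. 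Picking $\delta$ small enough that $2\delta^{2}+C\delta^{\alpha+2}\leq\epsilon/2$ completes the uniform bound $\int_{\Omega_R}(|\nabla u(t)|^{2}+|u(t)|^{2}+|u(t)|^{\alpha+2})\leq\epsilon$.

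No real obstacle is expected; the only mildly subtle point is handling the $L^{\alpha+2}$ tail, which is resolved by the Sobolev embedding $H^{1}(\mathbb{R})\hookrightarrow L^{\alpha+2}(\mathbb{R})$ together with the elementary inequality $|a+b|^{\alpha+2}\lesssim|a|^{\alpha+2}+|b|^{\alpha+2}$. The finiteness of the covering (from compactness) is what converts a pointwise-in-$t$ tail estimate into a uniform one.
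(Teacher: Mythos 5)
Your argument is correct and is exactly the standard proof of this classical fact (finite $\delta$-net from total boundedness, tail smallness at each center, and uniform transfer via the triangle inequality in $L^{2}$ and the Sobolev embedding $H^{1}(\mathbb{R})\hookrightarrow L^{\alpha+2}(\mathbb{R})$); the paper itself gives no proof and simply cites the result as classical. The only cosmetic point is that the multiplicative constants from $|a+b|^{\alpha+2}\lesssim|a|^{\alpha+2}+|b|^{\alpha+2}$ mean you should take the tails at the centers smaller than $\epsilon/(2C_{\alpha})$ rather than $\epsilon/2$, which changes nothing of substance.
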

\begin{proof}
Classic, see e.g. \cite{MR2838120}.
\end{proof}
Now, we can show the rigidity Proposition needed to end the proof:
\begin{prop}
\label{prop:ext}Suppose that $u\in C(H^{1})$ is a solution of (\ref{eq:nlsv})
such that $\left\{ u(t),\ t\geq0\right\} $ is relatively compact
in $H^{1}$. Then $u=0$.
\end{prop}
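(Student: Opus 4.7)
The plan is a truncated virial (Morawetz) argument, exploiting both the repulsivity of $V$ and the compactness of the orbit. Suppose for contradiction that $u\not\equiv 0$; then by mass conservation $\|u(t)\|_{L^2}^2=M(u)>0$ for all $t\ge 0$. Since $\{u(t),\,t\ge 0\}$ is relatively compact in $H^1$ and $0$ is not in its closure, the continuous map $v\mapsto \|v\|_{L^{\alpha+2}}^{\alpha+2}$ attains a strictly positive minimum there, so there exists $c_0>0$ with $\|u(t)\|_{L^{\alpha+2}}^{\alpha+2}\ge c_0$ for all $t\ge 0$. Compactness also yields a uniform bound $\|u(t)\|_{H^1}\le C$.

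Fix a smooth even function $\varphi$ with $\varphi(x)=x^2/2$ for $|x|\le 1$, $\varphi''\ge 0$ everywhere, and $\varphi'$ bounded (i.e.\ $\varphi'$ constant for $|x|\ge 2$), and set $\varphi_R(x)=R^2\varphi(x/R)$. Then $\varphi_R''(x)\in[0,1]$ with $\varphi_R''\equiv 1$ on $|x|\le R$, $|\varphi_R'|\lesssim R$, and $|\varphi_R^{(k)}|\lesssim R^{2-k}$ for $k\ge 2$. Define the truncated virial $z_R(t)=\int \varphi_R\,|u(t)|^2\,dx$. A direct computation from (\ref{eq:nlsv}) and integration by parts gives the identity
\[
z_R''(t)=4\int\varphi_R''|\partial_x u|^2+\frac{2\alpha}{\alpha+2}\int \varphi_R''|u|^{\alpha+2}-\int\varphi_R^{(4)}|u|^2-2\int\varphi_R'\,V'\,|u|^2.
\]
The first two terms are non-negative. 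The biharmonic term is $\lesssim R^{-2}M(u)=o_R(1)$. For the potential term I split at $|x|=R$: on $|x|\le R$ one has $\varphi_R'(x)=x$, so $-2\int_{|x|\le R}xV'|u|^2\ge 0$ by the repulsivity assumption $xV'\le 0$; on $|x|\ge R$ one uses $|\varphi_R'|\lesssim R$ together with $xV'\in L^1$ and the tail decay from Proposition \ref{prop:funccomp} to bound this error by $o_R(1)$ uniformly in $t$. Similarly, the contributions to the first two terms from $|x|\ge R$ are controlled uniformly in $t$ by Proposition \ref{prop:funccomp}. Combined with the lower bound $\|u(t)\|_{L^{\alpha+2}}^{\alpha+2}\ge c_0$, for $R$ sufficiently large one obtains
\[
z_R''(t)\ge \frac{\alpha\,c_0}{\alpha+2}\quad \text{for all }t\ge 0.
\]

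Integrating over $[0,T]$ yields $z_R'(T)-z_R'(0)\ge \frac{\alpha c_0}{\alpha+2}T$. On the other hand, by Cauchy--Schwarz,
\[
|z_R'(t)|=\Bigl|2\,\mathrm{Im}\int\varphi_R'\,\bar u\,\partial_x u\,dx\Bigr|\le 2\,\|\varphi_R'\|_{L^\infty}\|u(t)\|_{L^2}\|\partial_x u(t)\|_{L^2}\le C'R
\]
uniformly in $t$ (since $\{u(t)\}$ is $H^1$-bounded). Hence $\frac{\alpha c_0}{\alpha+2}T\le 2C'R$ for every $T>0$ with $R$ fixed --- a contradiction, so $u\equiv 0$.

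The main obstacle is the careful derivation of the virial identity and the verification that all truncation errors vanish uniformly in $t$ as $R\to\infty$. This is where the hypotheses of Theorem \ref{thm:main} are used sharply: the repulsivity $xV'\le 0$ provides the correct sign of the bulk potential term, the integrability $xV'\in L^1$ controls its tail, and Proposition \ref{prop:funccomp} (a consequence of the $H^1$-compactness of the orbit) provides the uniform-in-time control of the tails of $|\partial_x u|^2$, $|u|^2$ and $|u|^{\alpha+2}$ needed to make the truncation errors negligible.
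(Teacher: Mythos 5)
Your proof is correct and follows essentially the same route as the paper: a truncated virial identity for $z_R(t)=\int\varphi_R|u|^2$, the sign of $xV'$ for the bulk potential term, $xV'\in L^1$ for its tail, Proposition \ref{prop:funccomp} for the tails of $u$, and the contradiction between $z_R''\ge c>0$ and the uniform bound $|z_R'|\lesssim R$. Your choice of a globally convex cutoff (rather than the paper's compactly supported one) and your explicit justification of the uniform lower bound on $\|u(t)\|_{L^{\alpha+2}}$ via mass conservation are minor, if anything slightly cleaner, variants.
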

\begin{proof}
By a classical elementary computation, we get the following virial
identities:

\begin{lem}
Let $u\in C(H^{1})$ be a solution to (\ref{eq:nlsv}) and $\chi$
be a compactly supported, regular function. Then

\begin{equation}
\partial_{t}\int\chi|u|^{2}=2\text{Im}\int\chi'u'\bar{u}\label{eq:vir1}
\end{equation}
\begin{equation}
\partial_{t}^{2}\int\chi|u|^{2}=4\int\chi''|u'|^{2}+\frac{2\alpha}{\alpha+2}\int\chi''|u|^{\alpha+2}-2\int\chi'V'|u|^{2}-\int\chi^{(4)}|u|^{2}.\label{eq:vir2}
\end{equation}
\end{lem}
Now, we assume by contradiction that $u\neq0$. Let $\chi\in C_{c}^{\infty}$
be such that $\chi(x)=x^{2}$ for $|x|\leq1$ and $\chi(x)=0$ for
$|x|\geq2$, set $\chi_{R}:=R^{2}\chi(\frac{\cdot}{R})$ and
\[
z_{R}(t)=\int\chi_{R}|u(t)|^{2}
\]
we have, by (\ref{eq:vir1}), the Cauchy-Schwarz inequality and the
conservation of energy 
\begin{equation}
|z'_{R}(t)|\leq2\int|\chi_{R}'||u'||\bar{u}|\leq CE(u)^{\frac{1}{2}}M(u)^{\frac{1}{2}}R.\label{eq:prig1}
\end{equation}
Moreover, by (\ref{eq:vir2}) 
\begin{eqnarray}
z''_{R}(t) & = & 4\int\chi_{R}''|u'|^{2}+\frac{2\alpha}{\alpha+2}\int\chi_{R}''|u|^{\alpha+2}-2\int\chi_{R}'V'|u|^{2}-\int\chi_{R}^{(4)}|u|^{2}\nonumber \\
 & \ge & 8\int_{|x|\leq R}|u'|^{2}+\frac{4\alpha}{\alpha+2}\int_{|x|\leq R}|u|^{\alpha+2}-C\int_{|x|>R}\left(|u|^{2}+|u|^{\alpha+2}+|u'|^{2}\right)\nonumber \\
 &  & -2\int\chi_{R}'V'|u|^{2}-\int\chi^{(4)}|u|^{2}\label{eq:prig2}
\end{eqnarray}
but, because of conservation of the mass 
\begin{equation}
|\int\chi^{(4)}|u|^{2}|\leq\frac{C}{R^{2}}\Vert u(0)\Vert_{L^{2}}\label{eq:prig3}
\end{equation}
and, because $V$ is repulsive (ie $xV'\leq0$), using the Cauchy-Schwarz
inequality, the Sobolev injection $H^{1}\hookrightarrow L^{\infty}$
and the conservation laws 
\begin{eqnarray}
-2\int\chi_{R}'V'|u|^{2} & = & -2\int_{|x|\le R}xV'|u|^{2}+2\int_{|x|>R}\chi_{R}'V'|u|^{2}\nonumber \\
 & \geq & -C\int_{|x|>R}|xV'||u|^{2}\geq-C\Vert xV'\Vert_{L^{1}(|x|>R)}\Vert u\Vert_{L^{\infty}}^{2}\nonumber \\
 & \geq & -C\Vert xV'\Vert_{L^{1}(|x|>R)}\Vert u\Vert_{H^{1}}^{2}\geq-C(u(0))\Vert xV'\Vert_{L^{1}(|x|>R)}.\label{eq:prig4}
\end{eqnarray}
Let $R_{0}$ be large enough so that
\begin{equation}
\int_{|x|\leq R_{0}}|u|^{\alpha+2}\geq\frac{1}{2}\int|u|^{\alpha+2}:=\delta.\label{eq:prig5}
\end{equation}
We have $\delta>0$ because we suppose that $u$ is non zero. For
$R\geq R_{0}$, we obtain combining (\ref{eq:prig2}) with (\ref{eq:prig3}),
(\ref{eq:prig4}), and (\ref{eq:prig5}) 
\begin{equation}
z''_{R}(t)\geq C\left(\delta-\int_{|x|>R}\left(|u|^{2}+|u|^{\alpha+2}+|u'|^{2}\right)-\frac{1}{R^{2}}\Vert u(0)\Vert_{L^{2}}-\Vert xV'\Vert_{L^{1}(|x|>R)}\right).\label{eq:prig6}
\end{equation}
Because $xV'\in L^{1}$ and using the compacity hypothesis combined
with Proposition \ref{prop:funccomp}, there exists $R\geq R_{0}$
large enough so that
\[
\int_{|x|>R}\left(|u|^{2}+|u|^{\alpha+2}+|u'|^{2}\right)+\frac{1}{R^{2}}\Vert u(0)\Vert_{L^{2}}+\Vert xV'\Vert_{L^{1}(|x|>R)}\leq\frac{\delta}{2}
\]
then, (\ref{eq:prig6}) gives
\[
z_{R}''(t)\geq\frac{C\delta}{2}>0.
\]
Integrating this last inequality contradicts (\ref{eq:prig1}) as
$t\rightarrow\infty$.
\end{proof}
We are now in position to end the proof of \thmref{main} :
\begin{proof}[Proof of Theorem \ref{thm:main}]
 If $E_{c}<\infty$, then the Proposition \ref{prop:extr} allows
us to extract a critical element $\varphi_{c}\in H^{1}$, $\varphi_{c}\neq0$,
such that the corresponding solution $u_{c}$ of (\ref{eq:nlsv})
verifies that $\left\{ u_{c}(t),\ t\geq0\right\} $ is relatively
compact in $H^{1}$. By Proposition \ref{prop:ext}, such a critical
solution cannot exist, so $E_{c}=\infty$ and by Proposition \ref{pert1},
all the solutions of (\ref{eq:nlsv}) scatter in $H^{1}$.
\end{proof}

\subsection*{Acknowledgments: }

The Author thanks N. Visciglia for having submitted him this problem,
enlighting discussions and his warm welcome in Pisa, J. Zheng for
his helpfull comments, his Ph.D advisor F. Planchon for his disponibility
and advices, and the referee for his carefull reading and his constructive
criticism.

\bibliographystyle{IEEEtranS}
\bibliography{scat_pot_lafontaine_3}

\end{document}